\let\pa\partial  
\let\na\nabla  
\let\eps\varepsilon  
\newcommand{\N}{{\mathbb N}}  
\newcommand{\R}{{\mathbb R}} 
\newcommand{\Z}{{\mathbb Z}}  
\newcommand{\blue}{\textcolor{black}}
\renewcommand{\L}{{\mathcal L}}
\newtheorem{theorem}{Theorem}   
\newtheorem{lemma}[theorem]{Lemma}   
\newtheorem{proposition}[theorem]{Proposition}   
\newtheorem{remark}[theorem]{Remark}   
\newtheorem{corollary}[theorem]{Corollary}  
\newtheorem{assumption}{Assumption} 
\begin{document}  

\title[Discrete Beckner inequalities]{
Discrete Beckner inequalities via the Bochner-Bakry-Emery approach for
Markov chains}

\author[A. J\"{u}ngel]{Ansgar J\"{u}ngel}
\address{Institute for Analysis and Scientific Computing, Vienna University of  
Technology, Wiedner Hauptstra\ss e 8--10, 1040 Wien, Austria}
\email{juengel@tuwien.ac.at}

\author[W. Yue]{Wen Yue}
\address{Institute for Analysis and Scientific Computing, Vienna University of  
Technology, Wiedner Hauptstra\ss e 8--10, 1040 Wien, Austria}
\email{wen.yue@tuwien.ac.at}

\date{\today}

\thanks{The authors acknowledge partial support from   
the Austrian Science Fund (FWF), grants P24304, P27352, and W1245}  

\begin{abstract}
Discrete convex Sobolev inequalities and Beckner inequalities are derived
for time-continuous Markov chains on finite state spaces. Beckner inequalities
interpolate between the modified logarithmic Sobolev inequality and 
the Poincar\'e inequality. Their proof is based
on the Bakry-Emery approach and on discrete Bochner-type inequalities
established by Caputo, Dai Pra, and Posta and recently extended by Fathi and Maas
for logarithmic entropies. The abstract result for convex entropies 
is applied to several Markov chains, including 
birth-death processes, zero-range processes, Bernoulli-Laplace models, and
random transposition models, and to a finite-volume discretization of a
one-dimensional Fokker-Planck equation, applying results by Mielke.
\end{abstract}

\keywords{Time-continuous Markov chain, functional inequality, 
entropy decay, discrete Beckner inequality, stochastic particle systems.} 

\subjclass[2010]{60J27, 39B62, 60J80.}  

\maketitle


\section{Introduction}\label{sec.intro}

Convex Sobolev inequalities such as Poincar\'e and logarithmic Sobolev inequalities
play an important role in the analysis of the convergence to stationarity
for Markov processes. Besides implying exponential decay of the entropy, it is known
that these functional inequalities give useful concentration bounds \cite{BoTe06}
and hypercontractivity of the corresponding semigroup \cite{DiSa96},
and they are a natural tool to estimate mixing times \cite{MoTe06}.
There exists an extensive literature on the derivation of Poincar\'e
inequalities (or spectral gap estimates) and logarithmic Sobolev 
(or shorter: log-Sobolev) inequalities in the discrete and continuous setting;
see, e.g., 
the reviews \cite{DiSa96,GuZe03,MoTe06} and the books \cite{ABC00,BGL14,Wan05}.
An algorithm for the computation of the spectral gap is presented in \cite{ChSa14},
while corresponding estimates can be found in \cite{BCDP06,Che96,BuKe00}.
For log-Sobolev inequalities, we refer to \cite{BoLe98,CDP09,JSTV04}.

There are much less results on Beckner inequalities for Markov chains,
which interpolate between the Poincar\'e inequality and 
log-Sobolev inequality \cite{Bec89}. 
\blue{Such inequalities are of interest, for instance, in the large-time analysis of
Markov chains using general entropies or in numerical analysis, proving
the exponential decay of solutions to discretized partial differential
equations \cite{CJS16}.}
We are only aware of the paper by Bobkov and Tetali \cite{BoTe06},
where estimates on the constant of the Beckner inequality were derived for 
Bernoulli-Laplace and random transposition models. 
In this paper, we establish new bounds for discrete convex Sobolev
and Beckner inequalities for stochastic processes not studied in \cite{BoTe06}.

The technique of proof is the Bochner-Bakry-Emery method of Caputo et al.\
\cite{CDP09}, which was recently extended by Fathi and Maas in \cite{FaMa15} 
in the context of Ricci curvature bounds. 
The idea of the Bakry-Emery approach is to
relate the second time derivative of the entropy to its entropy production.
This relation is achieved by employing a discrete Bochner-type equation
which replaces the Bochner identity in the continuous case.

In order to make these ideas precise, consider a time-homogeneous Markov
process $(X_t)_{t\ge 0}$ with values in a finite state space $S$,
having an invariant measure $\pi$. We assume that the semigroup $(T_t)_{t\ge 0}$,
defined on $L^2(\pi)$ by $T_t f(x)=\textrm{E}[f(X_t):X_0=x]$, is strongly
right continuous, so that the infinitesimal generator $\L$ exists,
$T_t=e^{t\L}$. Given a probability measure $\mu$ on $S$, we denote by
$\mu T_t$ the distribution of $X_t$ assuming that $X_0$ is distributed according
to $\mu$. The rate of convergence of $\mu T_t$ to the invariant measure $\pi$
is a major topic in probability theory. It can be achieved by estimating
the time derivative of the relative entropy. 

Before explaining the entropy decay, we introduce some notation.
The relative entropy $h^\phi(\mu|\pi)$ of $\mu$ with respect to $\pi$ is defined by
$$
  h^\phi(\mu|\pi) = \pi\left[\phi\left(\frac{d\mu}{d\pi}\right)\right]
	= \sum_{\eta\in S}\pi(\eta)\phi\left(\frac{d\mu}{d\pi}\right)(\eta),
$$
where $\phi:\R_+\to\R_+$ is a smooth convex function such that 
$\phi(1)=0$ and $1/\phi''$ is concave, $\R_+=[0,\infty)$,
and $h^\phi(\mu|\pi)$ is meant to be infinite whenever $\mu\not\ll\pi$ 
or $\phi(d\mu/d\pi)\not\in 
L^1(\pi)$. The entropy can be defined on the set of probability densities
$f$ such that $\phi(f)\in L^1(\pi)$ by
$$
  \textrm{Ent}_\pi^\phi(f) = \pi[\phi(f)],
$$
so that $h^\phi(\mu|\pi) = \textrm{Ent}_\pi^\phi(d\mu/d\pi)$. 
When $\phi_1(s)=s(\log s-1)+1$, we obtain the logarithmic entropy 
and if $\phi_2(s)=s^2-2s+1$, $\textrm{Ent}_\pi^\phi(f)$ equals the variance of $f$, 
$\textrm{Var}_\pi(f)=\pi[f^2]-\pi[f]^2$. \blue{Another example is
$\phi_\alpha(s)=(s^\alpha-s)/(\alpha-1)-s+1$ for $1<\alpha\le 2$, which interpolates
between $\phi_1$ and $\phi_2$ in the sense that 
$\phi_\alpha(s)\to\phi_1(s)$ pointwise as $\alpha\to 1$ and 
$\phi_\alpha(s)=\phi_2$ if $\alpha=2$.}

Let $\rho_t=d(\mu T_t)/d\pi$ be the probability
density of the Markov chain at time $t\ge 0$. 
We assume in the following that the Markov chain
is reversible, i.e., the generator is self-adjoint in $L^2(\pi)$.
Then $\rho_t$ solves the Kolmogorov equation $\pa_t\rho_t=\L\rho_t$, $t>0$.
The idea of Bakry and Emery \cite{BaEm85} is to differentiate the entropy
twice with respect to time. A formal computation gives
\begin{equation}\label{1.dEntdt}
\begin{aligned}
  \frac{d}{dt}\textrm{Ent}_\pi^\phi(\rho_t) &= -{\mathcal E}(\phi'(\rho_t),\rho_t), \\
	\frac{d^2}{dt^2}\textrm{Ent}_\pi^\phi(\rho_t) 
	&= \pi[\L\phi'(\rho_t)\L\rho_t + \phi''(\rho_t)(\L\rho_t)^2\big],
\end{aligned}
\end{equation}
where ${\mathcal E}(f,g):=-\pi[f\L g]$ is the Dirichlet form of $\L$.
Now suppose that the following inequality holds for some $\lambda>0$:
\begin{equation}\label{1.ineq}
  \pi[\L\phi'(\rho)\L\rho + \phi''(\rho)(\L\rho)^2\big]
	\ge \lambda{\mathcal E}(\phi'(\rho),\rho), \quad t>0.
\end{equation}
This is equivalent to 
$\pa^2_t\textrm{Ent}_\pi^\phi(\rho)+\lambda\pa_t\textrm{Ent}_\pi^\phi(\rho)\ge 0$,
and by Gronwall's lemma, we conclude that $\pa_t\textrm{Ent}_\pi^\phi(\rho_t)$
converges to zero with exponential rate. Furthermore, 
integration over $(t,\infty)$ leads to
\begin{equation}\label{1.dHdt}
  \frac{d}{dt}\textrm{Ent}_\pi^\phi(\rho) 
	+ \lambda\textrm{Ent}_\pi^\phi(\rho) \le 0, \quad t>0,
\end{equation}
if we know that $\textrm{Ent}_\pi^\phi(\rho_t)\to 0$ as $t\to\infty$. 
On the one hand, this implies
exponential convergence of the relative entropy to zero,
i.e., $\textrm{Ent}_\pi^\phi(\rho_t)\le \textrm{Ent}_\pi^\phi(\rho_0)e^{-\lambda t}$.
On the other hand, \eqref{1.dHdt} is equivalent to the {\em convex Sobolev inequality}
\begin{equation}\label{1.gcsi}
  \lambda\textrm{Ent}_\pi^\phi(f)\le {\mathcal E}(\phi'(f),f),
\end{equation}
valid for all probability densities $f$. 

\blue{It is well known that if the so-called curvature-dimension condition 
$CD(\lambda,\infty)$ is satisfied, then the convex Sobolev inequality \eqref{1.gcsi}
is valid \cite[Section 1.16]{BGL14}. 
For instance, if $\L$ is the generator of the Ornstein-Uhlenbeck
process, $CD(\lambda,\infty)$ holds with $\lambda=1$ under the conditions that 
$\phi$ is convex and $1/\phi''$ is concave \cite{AMTU01}. In the discrete case,
the validity of \eqref{1.gcsi} is not known except in the logarithmic case
$\phi=\phi_1$. In this paper, we derive general conditions on $\phi$ 
that guarantee the validity of \eqref{1.gcsi}.}

For the special cases $\phi_1(s)=s(\log s-1)+1$ and $\phi_2(s)=s^2-2s+1$, we obtain
the {\em modified log-Sobolev inequality} and
{\em Poincar\'e inequality}, respectively,
\begin{equation}\label{1.csi}
  \lambda_M\textrm{Ent}_\pi^{\phi_1}(f) \le {\mathcal E}(\log f,f), \quad
	\lambda_P\textrm{Var}_\pi(f) \le {\mathcal E}(f,f).
\end{equation}
Note that if $\L$ is the generator of a reversible diffusion process,
we may write ${\mathcal E}(\log f,f)=4{\mathcal E}(f^{1/2},f^{1/2})$,
so the log-Sobolev inequality 
$\lambda_L\textrm{Ent}_\pi^{\phi_1}(f) \le {\mathcal E}(f^{1/2},f^{1/2})$
and the first inequality in \eqref{1.csi} coincide with $\lambda_M = 4\lambda_L$. 
This is generally
not true for Markov processes with jumps \cite{BoLe98}, but for reversible
processes, the relations $4\lambda_L\le\lambda_M\le 2\lambda_P$ hold
\cite{BoTe06,DiSa96}.

The aim of this paper is to determine conditions under which there
exists a constant $\lambda>0$ such that 
the (discrete) convex Sobolev inequality \eqref{1.gcsi}
and the exponential entropy decay
\begin{equation}\label{1.exp}
  \textrm{Ent}_\pi^\phi(\rho_t) \le e^{-\lambda t}\textrm{Ent}_\pi^\phi(f).
	\quad t>0,
\end{equation}
hold. Furthermore, we derive explicit constants $\lambda_B(\alpha)>0$
such that the (discrete) {\em Beckner inequality} holds:
\begin{equation}\label{1.beck}
  \lambda_B(\alpha)\textrm{Ent}_\pi^{\phi_\alpha}(\rho)
	\le \frac{\alpha}{\alpha-1}{\mathcal E}(\rho^{\alpha-1},\rho),
	\quad 1<\alpha\le 2.
\end{equation}

\blue{
The Beckner inequality is related to the modified log-Sobolev and Poincar\'e
inequalities. Indeed, if $\alpha\to 1$, \eqref{1.beck} becomes the
modified log-Sobolev inequality with 
$\lim_{\alpha\to 1}\lambda_B(\alpha)=\lambda_M$ and if $\alpha=2$,
\eqref{1.beck} equals the Poincar\'e inequality with $\lambda_B(2)=2\lambda_P$.
For $1<\alpha<2$, applying \eqref{1.beck} to functions of the form $1+\eps f$,
performing a Taylor expansion, and letting $\eps\to 0$ shows that
$\lambda_B(\alpha)\le 2\lambda_P$.}

According to the above discussion, inequalities \eqref{1.csi}-\eqref{1.beck}
are achieved by proving \eqref{1.ineq},
and the proof of this inequality is based on a discrete Bochner-type identity.
The idea to employ such an identity was first presented in
\cite{BCDP06}, elaborated later in \cite{CDP09,FaMa15}, and goes back to
\cite{Boc46}. The identity is obtained by identifying the Radon-Nikodym
derivative of a measure involving the jump rates of the Markov chain
\cite[Section 2]{BCDP06}. This allows one to relate terms with different
orders of ``discrete derivatives'' occuring in $\L$. For details, we refer to
Section \ref{sec.boch}. Our technique of proving \eqref{1.beck} is
completely different from the work \cite{BoTe06}, where an iteration method 
was used to derive discrete Beckner inequalities.

Fathi and Maas \cite{FaMa15} extended the results of Caputo et al. \cite{CDP09}.
The key idea of \cite{FaMa15} (and, by the way, of \cite{Mie13}) is the use of
the logarithmic mean 
$$
  \rho^*(\eta,\xi) = \frac{\rho(\eta)-\rho(\xi)}{\log\rho(\eta)-\log\rho(\xi)}
$$
in the analysis. The logarithmic mean allows for the discrete chain rule 
$\rho^*\na\log\rho = \na\rho$, where $\na\rho(\eta,\xi)=\rho(\eta)-\rho(\xi)$,
which naturally holds in the continuous case. This chain rule is needed to
treat the logarithmic entropy. In the case of general convex entropies, it is natural
to replace the logarithmic mean by 
\begin{equation}\label{1.mean}
  \widehat\rho(\eta,\xi) = \frac{\rho(\eta)-\rho(\xi)}{\phi'(\rho(\eta))
	-\phi'(\rho(\xi))}, \quad \phi\mbox{ convex},
\end{equation}
which satisfies the discrete chain rule $\widehat\rho\na\phi'(\rho)
=\na\rho$ since $\widehat\rho$ ``approximates'' $1/\phi''(\rho)$. 
When $\phi=\phi_\alpha$, we obtain the power mean
$$
  \widehat\rho(\eta,\xi) = \frac{\alpha-1}{\alpha}
	\frac{\rho(\eta)-\rho(\xi)}{\rho(\eta)^{\alpha-1}-\rho(\xi)^{\alpha-1}},
	\quad 1<\alpha<2.
$$
We remark that the idea to enforce a discrete chain rule is well known in
the design of structure-preserving numerical schemes and was used, e.g.,
in the construction of entropy-conservative finite-volume fluxes \cite{FMT12} and
in the discrete variational derivative method \cite{FuMa11}.

\blue{The novelty of this paper is the identification of the conditions on $\phi$ 
that are needed to apply the technique of \cite{CDP09,FaMa15}. 
It turns out that, besides convexity of $\phi$ and the concavity of $1/\phi''$, 
the concavity of 
\begin{equation}\label{1.theta}
  \theta(s,t) = \frac{s-t}{\phi'(s)-\phi'(t)}, \quad s\neq t,
	\quad \theta(s,s) = \frac{1}{\phi''(s)},
\end{equation}
is needed. 
This is not surprising since $\theta(s,t)$ is a discrete approximation
of $1/\phi''$, and the concavity of $1/\phi''$ is assumed in the continuous case. 
Conditions on $\phi$ that guarantee the concavity of $\theta$ are stated in
Lemma \ref{lem.phi1}.
Both the logarithmic mean and the power mean satisfy these conditions; 
see Lemma \ref{lem.theta1}.
The general theory can be applied to birth-death processes, thus yielding
new discrete convex Sobolev inequalities. For other stochastic processes
considered in this paper (zero-range processes, Bernoulli-Laplace models, 
random transposition models), a homogeneity property of $\theta$ is needed,
which restricts the class of admissible functions $\phi$. It turns out that
the logarithmic mean and the power mean satisfy this property; 
see Lemma \ref{lem.theta1}. For the 
mentioned processes, new discrete Beckner inequalities are derived.
}

The paper is organized as follows.
We detail the Bochner-Bakry-Emery method in Section \ref{sec.boch}.
The validity of the discrete Beckner inequality \eqref{1.beck} is reduced
to the validity of \blue{a modification of \eqref{1.ineq}}.
In Section \ref{sec.ex}, we apply the general technique to four stochastic
processes (as in \cite{FaMa15}): birth-death processes, zero-range processes,
Bernoulli-Laplace models, and random transposition models. Furthermore, 
the results for birth-death processes are applied to a finite-volume discretization
of a one-dimensional Fokker-Planck equation, yielding exponential decay of the
discrete entropy. The proof consists of a combination of the convex Sobolev inequality 
for birth-death processes and the results of Mielke \cite{Mie13}, who proved 
exponential decay for the logarithmic entropy. 

Our main conclusion is that the Bochner-Bakry-Emery approach is sufficiently 
flexible to be applicable to power functions and, in certain cases, to general
convex functions.


\section{The Bochner method}\label{sec.boch}

Let an irreducible and reversible Markov chain on a finite state space $S$
be given and let $\pi$ be the invariant measure. We write the generator $\L$
in the form
$$
  \L f(\eta) = \sum_{\gamma\in G}c(\eta,\gamma)\na_\gamma f(\eta),
$$
where $G$ is the set of allowed moves (represented by functions 
$\gamma:S\to S$), the mapping $c:S\times G\to[0,\infty)$
represents the jump rates, and $\na_\gamma f(\eta)=f(\gamma\eta)-f(\eta)$.
We observe that the generator of every finite Markov chain can be written
in this form. We assume the following two properties: For any 
$\gamma\in G$, there exists $\gamma^{-1}\in G$ satisfying $\gamma^{-1}\gamma\eta=\eta$
for all $\eta\in S$ with $c(\eta,\gamma)>0$. Furthermore, the reversibility
condition
$$
  \pi\bigg[\sum_{\gamma\in G}c(\eta,\gamma)F(\eta,\gamma)\bigg]
	= \pi\bigg[\sum_{\gamma\in G}c(\eta,\gamma)F(\gamma\eta,\gamma^{-1})\bigg]
$$
holds for all $F:S\times G\to\R$. Under reversibility, the Dirichlet form
can be written as
\begin{equation}\label{2.E}
  {\mathcal E}(f,g) = \frac12\pi\bigg[\sum_{\gamma\in G}c(\eta,\gamma)
	\na_\gamma f(\eta)\na_\gamma g(\eta)\bigg].
\end{equation}

For the discrete Bochner-type identity, we suppose as in \cite{CDP09}:

\begin{assumption}\label{ass}
There exists a function $R:S\times G\times G\to\R$ such that \\
{\rm (i)} $R(\eta,\gamma,\delta)=R(\eta,\delta,\gamma)$ for all $\eta\in S$, $\gamma$,
$\delta\in G$; \\
{\rm (ii)} for all bounded functions $\psi:S\times G\times G\to \R$,
$$
  \pi\bigg[\sum_{\gamma,\delta\in G}R(\eta,\gamma,\delta)
	\psi(\eta,\gamma,\delta)\bigg]
	= \pi\bigg[\sum_{\gamma,\delta\in G}R(\eta,\gamma,\delta)
	\psi(\gamma\eta,\gamma^{-1},\delta)\bigg].
$$
\noindent {\rm (iii)} $\gamma\delta\eta = \delta\gamma\eta$ for all $\eta\in S$,
$\gamma$, $\delta\in G$ with $R(\eta,\gamma,\delta)>0$.
\end{assumption}

The following lemma, which extends Lemma 2.3 in \cite{CDP09}, was proven in 
\cite[Lemma 3.3]{FaMa15}. It expresses a discrete Bochner-type identity.

\begin{lemma}\label{lem.14}
Let $\chi$, $\psi:S\to\R$ and let $\beta:S\times S\to\R$ be symmetric. Then
\begin{align*}
  \pi\bigg[\sum_{\gamma,\delta\in G} & R(\eta,\gamma,\delta)\beta(\eta,\delta\eta)
	\na_\delta\chi(\eta)\na_\gamma\psi(\eta)\bigg] \\
	&= \frac14\pi\bigg[\sum_{\gamma,\delta\in G}R(\eta,\gamma,\delta)
	\na_\gamma\big(\beta(\eta,\delta\eta)\na_\delta\chi(\eta)\big)\na_\delta
	\na_\gamma\psi(\eta)\bigg].
\end{align*}
\end{lemma}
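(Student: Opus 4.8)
The plan is to prove the identity by symmetrizing twice, first in the move $\gamma$ and then in the move $\delta$, using all three parts of Assumption~\ref{ass} together with the symmetry of $\beta$. To lighten the notation I abbreviate $A_\delta(\eta) := \beta(\eta,\delta\eta)\na_\delta\chi(\eta)$, so that the left-hand side is $L := \pi[\sum_{\gamma,\delta} R(\eta,\gamma,\delta)\,A_\delta(\eta)\,\na_\gamma\psi(\eta)]$ and the claimed right-hand side is $\tfrac14\pi[\sum_{\gamma,\delta}R(\eta,\gamma,\delta)\,\na_\gamma A_\delta(\eta)\,\na_\delta\na_\gamma\psi(\eta)]$, since $\na_\gamma A_\delta(\eta) = \na_\gamma(\beta(\eta,\delta\eta)\na_\delta\chi(\eta))$.

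First I would symmetrize in $\gamma$. Applying Assumption~\ref{ass}(ii) with the choice $\Phi(\eta,\gamma,\delta)=A_\delta(\eta)\na_\gamma\psi(\eta)$ and using the elementary relation $\na_{\gamma^{-1}}\psi(\gamma\eta)=-\na_\gamma\psi(\eta)$ (valid wherever $R(\eta,\gamma,\delta)>0$, since there $\gamma^{-1}\gamma\eta=\eta$) rewrites $L$ as $-\pi[\sum R\,A_\delta(\gamma\eta)\,\na_\gamma\psi(\eta)]$. Averaging this with the original expression for $L$ and recognizing $A_\delta(\gamma\eta)-A_\delta(\eta)=\na_\gamma A_\delta(\eta)$ yields $2L=-M$, where $M:=\pi[\sum R\,\na_\gamma A_\delta(\eta)\,\na_\gamma\psi(\eta)]$ already carries one discrete derivative on the $A_\delta$-factor.

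Next I would symmetrize $M$ in $\delta$. Since Assumption~\ref{ass}(ii) is written for the move $\gamma$, I first record its $\delta$-counterpart: combining the $\gamma\leftrightarrow\delta$ symmetry of $R$ in part~(i) with part~(ii) gives $\pi[\sum R\,\Phi(\eta,\gamma,\delta)]=\pi[\sum R\,\Phi(\delta\eta,\gamma,\delta^{-1})]$ for every bounded $\Phi$. Applying this to $M$, and then averaging with the original $M$, introduces the second difference $\na_\delta\na_\gamma\psi(\eta)=\na_\gamma\psi(\delta\eta)-\na_\gamma\psi(\eta)$ in the $\psi$-factor, so that $2M=-\pi[\sum R\,\na_\gamma A_\delta(\eta)\,\na_\delta\na_\gamma\psi(\eta)]$. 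Combining $2L=-M$ with $2M=-\pi[\cdots]$ gives $4L=\pi[\cdots]$, which is exactly the assertion.

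The main obstacle is the bookkeeping inside the $\delta$-symmetrization: I must check that the coefficient $\na_\gamma A_\delta(\eta)$ is only negated under the substitution $\eta\mapsto\delta\eta$, $\delta\mapsto\delta^{-1}$, i.e.\ that $\na_\gamma A_{\delta^{-1}}(\delta\eta)=-\na_\gamma A_\delta(\eta)$. Concretely I expect to verify the two pointwise identities $A_{\delta^{-1}}(\delta\eta)=-A_\delta(\eta)$ and $A_{\delta^{-1}}(\gamma\delta\eta)=-A_\delta(\gamma\eta)$ and subtract. This is precisely where the three hypotheses cooperate: the symmetry of $\beta$ gives $\beta(\delta\eta,\eta)=\beta(\eta,\delta\eta)$; the chain-rule type relation $\na_{\delta^{-1}}\chi(\delta\eta)=-\na_\delta\chi(\eta)$ handles the gradient factor; and the commutativity $\gamma\delta\eta=\delta\gamma\eta$ from part~(iii), available because $R(\eta,\gamma,\delta)>0$ on the relevant terms, is what reduces $\delta^{-1}\gamma\delta\eta=\gamma\eta$ and lets the same cancellation run at the shifted point $\gamma\eta$. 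Getting these compositions right is the only delicate point; once they are in place the two averaging steps assemble into the factor $\tfrac14$.
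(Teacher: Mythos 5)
Your proof is correct and takes essentially the same route as the paper's source: the paper does not reprove Lemma \ref{lem.14} but cites \cite[Lemma 3.3]{FaMa15} (extending \cite[Lemma 2.3]{CDP09}), and that proof is exactly your double symmetrization --- once in $\gamma$ via Assumption \ref{ass}(ii), once in $\delta$ via (i) combined with (ii), with (iii) and the symmetry of $\beta$ supplying the sign flips $\na_\gamma A_{\delta^{-1}}(\delta\eta)=-\na_\gamma A_\delta(\eta)$, and the two averagings producing the factor $\tfrac14$. The only caveat, present equally in the cited works, is the implicit use of the fact that $R(\eta,\gamma,\delta)>0$ forces the relevant rates to be positive (e.g.\ $c(\eta,\gamma)>0$ and $c(\gamma\eta,\delta)>0$), so that the inverse relations $\gamma^{-1}\gamma\eta=\eta$ and $\delta^{-1}\delta\gamma\eta=\gamma\eta$ may be invoked on the support of $R$; this holds in all the examples considered.
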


The key estimate is contained in the following proposition that is an extension of
Theorem 3.5 in \cite{FaMa15} from the logarithmic case to the case of general
convex functions.

\begin{proposition}\label{prop}
\blue{Let $\phi\in C^3((0,\infty);(0,\infty))$ be convex such that $\phi(1)=0$,
$1/\phi''$ is concave on $(0,\infty)$, and let $\theta$, defined in 
\eqref{1.theta}, be concave.}
Assume that there exists a function $R$ satisfying Assumption \ref{ass} and define
$\Gamma(\eta,\gamma,\delta)=c(\eta,\gamma)c(\eta,\delta)-R(\eta,\gamma,\delta)$ for
$\eta\in S$ and $\gamma$, $\delta\in G$. Then, for any positive probability density
$\rho$,
\blue{\begin{align} 
  \pi\big[ & \L\phi'(\rho)\L\rho 
	+\phi'' (\rho) (\L\rho)^2\big] \label{3.prop} \\
	&\ge \pi\bigg[\sum_{\gamma,\delta\in G}\Gamma(\eta,\gamma,\delta)\Big(
	\na_\gamma\phi'(\rho(\eta))\na_\delta\rho(\eta)
	+ \phi''(\rho(\eta))\na_\gamma\rho(\eta)
	\na_\delta\rho(\eta)\Big)\bigg]. \nonumber
\end{align}}
\end{proposition}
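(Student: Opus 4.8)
The plan is to reduce \eqref{3.prop} to the nonnegativity of a single $R$-weighted sum and then to establish that nonnegativity through the Bochner identity of Lemma \ref{lem.14} combined with the concavity hypotheses on $\phi$. First I would expand both generators on the left-hand side of \eqref{3.prop} via $\L f(\eta)=\sum_{\gamma}c(\eta,\gamma)\na_\gamma f(\eta)$, obtaining
\[
  \pi\big[\L\phi'(\rho)\L\rho+\phi''(\rho)(\L\rho)^2\big]
  =\pi\bigg[\sum_{\gamma,\delta\in G}c(\eta,\gamma)c(\eta,\delta)
  \big(\na_\gamma\phi'(\rho)\na_\delta\rho+\phi''(\rho)\na_\gamma\rho\na_\delta\rho\big)\bigg].
\]
Since $\Gamma(\eta,\gamma,\delta)=c(\eta,\gamma)c(\eta,\delta)-R(\eta,\gamma,\delta)$, subtracting the right-hand side of \eqref{3.prop} shows that the claim is equivalent to
\[
  \pi\bigg[\sum_{\gamma,\delta\in G}R(\eta,\gamma,\delta)
  \big(\na_\gamma\phi'(\rho)\na_\delta\rho+\phi''(\rho)\na_\gamma\rho\na_\delta\rho\big)\bigg]\ge 0,
\]
so the entire problem becomes the control of this $R$-weighted expression.

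Next I would rewrite the cross term by the discrete chain rule. Taking $\chi=\psi=\phi'(\rho)$ and the symmetric weight $\beta(\eta,\xi)=\theta(\rho(\eta),\rho(\xi))$ in Lemma \ref{lem.14}, the identity $\widehat\rho\,\na\phi'(\rho)=\na\rho$ from \eqref{1.mean} converts $\beta(\eta,\delta\eta)\na_\delta\phi'(\rho)(\eta)$ into $\na_\delta\rho(\eta)$, so that the left-hand side of Lemma \ref{lem.14} is exactly the cross term. Its right-hand side then yields
\[
  \pi\bigg[\sum_{\gamma,\delta\in G}R\,\na_\gamma\phi'(\rho)\na_\delta\rho\bigg]
  =\frac14\pi\bigg[\sum_{\gamma,\delta\in G}R\,\na_\gamma\na_\delta\rho\,\na_\delta\na_\gamma\phi'(\rho)\bigg],
\]
and by part (iii) of Assumption \ref{ass} the moves commute wherever $R>0$, so $\na_\delta\na_\gamma=\na_\gamma\na_\delta$ and the second differences become symmetric. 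At this stage the quantity to be shown nonnegative is
\[
  \pi\bigg[\sum_{\gamma,\delta\in G}R\Big(\tfrac14\na_\gamma\na_\delta\rho\,\na_\gamma\na_\delta\phi'(\rho)
  +\phi''(\rho)\na_\gamma\rho\na_\delta\rho\Big)\bigg].
\]

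Finally I would symmetrize the curvature term $\phi''(\rho)\na_\gamma\rho\na_\delta\rho$ by applying the reversibility property (ii) of Assumption \ref{ass} under the substitutions $\gamma\mapsto\gamma^{-1}$ and $\delta\mapsto\delta^{-1}$, so that the four values $\rho(\eta)$, $\rho(\gamma\eta)$, $\rho(\delta\eta)$, $\rho(\gamma\delta\eta)$ enter symmetrically and can be matched with the already symmetric second-difference cross term. What then remains is to verify that the resulting integrand is nonnegative pointwise for every admissible quadruple of values, that is, a four-point functional inequality in $\rho(\eta)$, $\rho(\gamma\eta)$, $\rho(\delta\eta)$, $\rho(\gamma\delta\eta)$.

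The main obstacle is precisely this last pointwise inequality. Unlike the continuous Bakry-Emery computation, the mixed second difference $\na_\gamma\na_\delta\phi'(\rho)$ does not reduce to $\phi''(\rho)^{-1}\na_\gamma\na_\delta\rho$, and the product $\na_\gamma\na_\delta\rho\,\na_\gamma\na_\delta\phi'(\rho)$ may change sign, so it must be absorbed into the curvature term rather than being nonnegative on its own. This is where the hypotheses are used: the concavity of $1/\phi''$ governs the \emph{diagonal} part exactly as in the continuous setting, while the genuinely discrete second-difference part is controlled by the concavity of $\theta$, the discrete surrogate of $1/\phi''$ that enters through the chain-rule weight $\widehat\rho=\theta(\rho(\eta),\rho(\delta\eta))$. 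In the logarithmic case $\theta$ is the logarithmic mean and the required estimate is exactly the one obtained in \cite[Theorem 3.5]{FaMa15}; the new content is that convexity of $\phi$ together with concavity of $1/\phi''$ and of $\theta$ suffices to reproduce that pointwise bound for general convex entropies.
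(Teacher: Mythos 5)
Your reductions are sound and coincide with the skeleton of the paper's argument: expanding $\L$ and splitting $c(\eta,\gamma)c(\eta,\delta)=R+\Gamma$ shows that \eqref{3.prop} is equivalent to nonnegativity of the $R$-weighted sum, and your application of Lemma \ref{lem.14} with $\beta(\eta,\xi)=\theta(\rho(\eta),\rho(\xi))$, so that $\beta\,\na_\delta\phi'(\rho)=\na_\delta\rho$, is exactly how the paper produces its term $A_1$. The problem is that everything after that point is missing. You reduce the claim to a ``four-point functional inequality'' which you yourself identify as the main obstacle, and then you do not prove it; you only assert that convexity of $\phi$ together with concavity of $1/\phi''$ and of $\theta$ ``suffices'' and point to \cite{FaMa15} for the logarithmic case. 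That pointwise bound \emph{is} the content of the proposition, so what you have is a plan, not a proof.

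Moreover, the pointwise route you sketch is not the one that works in the paper, and it is not clear it can work as stated. The integrand $\frac14\na_\gamma\na_\delta\rho\,\na_\gamma\na_\delta\phi'(\rho)+\phi''(\rho)\na_\gamma\rho\,\na_\delta\rho$ is genuinely negative at some configurations: take $\phi=\phi_2$ and $\rho(\gamma\delta\eta)=\rho(\gamma\eta)+\rho(\delta\eta)-\rho(\eta)$ with $\na_\gamma\rho\,\na_\delta\rho<0$, so both second differences vanish while the curvature term is negative. Hence any pointwise statement requires a symmetrization whose precise form and validity you never establish, and the natural proof of such a statement is exactly the argument you are trying to avoid. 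The paper in fact never proves a pointwise four-point inequality: it (a) rewrites the curvature term through $\widehat\rho_1=\pa_1\theta$ as in \eqref{3.hatrho1}, which splits off a copy of the cross term; (b) symmetrizes the resulting term $B_1$ using Assumption \ref{ass} (i)--(ii) --- an identity of \emph{integrals}, not of integrands --- to bring in $\widehat\rho_2$; (c) uses the concavity of $\theta$ through the tangent-plane inequality \eqref{uvst} of Lemma \ref{lem.phi1}, giving $\widehat\rho_1(\eta,\delta\eta)\na_\gamma\rho(\eta)+\widehat\rho_2(\eta,\delta\eta)\na_\gamma\rho(\delta\eta)\ge\na_\gamma\widehat\rho(\eta,\delta\eta)$; (d) applies the algebraic identity \eqref{3.id}; and (e) cancels the two remaining sign-indefinite terms against each other using Assumption \ref{ass} (ii)--(iii), again an integral cancellation, leaving only the manifestly nonnegative term $\widehat\rho(\gamma\eta,\gamma\delta\eta)(\na_\gamma\na_\delta\phi'(\rho))^2$. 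Steps (b), (c), and (e) --- the places where the reversible measure and the concavity of $\theta$ actually enter --- are absent from your proposal, and without them there is no proof.
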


\begin{remark}\label{rem.hat}\rm
In Lemma \ref{lem.phi1} (see Appendix), conditions on $\phi$ are 
stated guaranteeing the concavity of $\theta$.
We introduce the following notation:
\begin{align}
  \widehat\rho(\eta,\delta\eta) &= \theta(\rho(\eta),\rho(\delta\eta))
	= \frac{\rho(\delta\eta)-\rho(\eta)}{\phi'(\rho(\delta\eta))
	-\phi'(\rho(\eta))}
	= \frac{\na_\delta\rho(\eta)}{\na_\delta\phi'(\rho(\eta))}, \label{3.hatrho} \\
	\widehat\rho_1(\eta,\delta\eta) &= \pa_1\theta(\rho(\eta),\rho(\delta\eta))
	= -\frac{1}{\na_\delta\phi'(\rho(\eta))}
	+\frac{\na_\delta\rho(\eta)\phi''(\rho(\eta))}{(\na_\delta\phi'(\rho(\eta)))^2}, 
	\label{3.hatrho1} \\
	\widehat\rho_2(\eta,\delta\eta) &= \pa_2\theta(\rho(\eta),\rho(\delta\eta))
	= \widehat\rho_1(\delta\eta,\eta),
	\label{3.hatrho2}
\end{align}
where $\pa_1\theta$ and $\pa_2\theta$ are the partial derivatives of $\theta$
with respect to the first and second variable, respectively. 
\qed
\end{remark}

\begin{proof}[Proof of Proposition \ref{prop}.]
The first term on the left-hand side of \eqref{3.prop} can be written as
follows, using the definitions of $\L$, $\widehat\rho$, and $\Gamma$:
\begin{align*}
  \pi\big[\L\phi'(\rho) \L\rho\big]
	&= \pi\bigg[\sum_{\gamma,\delta\in G}c(\eta,\gamma)c(\eta,\delta)\na_\gamma
	\phi' (\rho) \na_\delta\rho(\eta)\bigg] \\
	&= \pi\bigg[\sum_{\gamma,\delta\in G}c(\eta,\gamma)c(\eta,\delta)
	\widehat\rho(\eta,\delta\eta)\na_\gamma
	\phi'(\rho(\eta))\na_\delta\phi'(\rho(\eta))\bigg] \\
	&= \pi\bigg[\sum_{\gamma,\delta\in G}R(\eta,\gamma,\delta)
	\widehat\rho(\eta,\delta\eta)\na_\gamma
	\phi'(\rho(\eta))\na_\delta\phi'(\rho(\eta))\bigg] \\
	&\phantom{xx}{}+ \pi\bigg[\sum_{\gamma,\delta\in G}\Gamma(\eta,\gamma,\delta)
	\widehat\rho(\eta,\delta\eta)\na_\gamma
	\phi'(\rho(\eta))\na_\delta\phi'(\rho(\eta))\bigg].
\end{align*}
By Lemma \ref{lem.14} with $\beta(\eta,\delta\eta)=\widehat\rho(\eta,\delta\eta)$,
the first term on the right-hand side of the previous equation
can be rewritten, leading to $\pi[\L\phi'(\rho)\L\rho]=A_1+A_2$, where
\begin{align*}
  A_1 &= \frac14\pi\bigg[\sum_{\gamma,\delta\in G}R(\eta,\gamma,\delta)
	\na_\gamma\big(\widehat\rho(\eta,\delta\eta)\na_\delta\phi'(\rho(\eta))\big)
	\na_\delta\na_\gamma\phi'(\rho(\eta))\bigg], \\
	A_2 &= \pi\bigg[\sum_{\gamma,\delta\in G}\Gamma(\eta,\gamma,\delta)
	\widehat\rho(\eta,\delta\eta)\na_\gamma
	\phi'(\rho(\eta))\na_\delta\phi'(\rho(\eta))\bigg].
\end{align*}

Next, we reformulate the second term on the left-hand side of \eqref{3.prop},
using the definitions of $\L$, $\widehat\rho_1$, and $\Gamma$:
\begin{align*}
\pi\big[\phi''(\rho)(\L\rho)^2\big]
	&= \pi\bigg[\sum_{\gamma,\delta\in G}c(\eta,\gamma)c(\eta,\delta)
	\na_\gamma\rho(\eta)\na_\delta\rho(\eta)\phi''(\rho(\eta))\bigg] \\
	&= \pi\bigg[\sum_{\gamma,\delta\in G}c(\eta,\gamma)c(\eta,\delta) 
	\na_\gamma\rho(\eta)	\widehat\rho_1(\eta,\delta\eta)
	(\na_\delta\phi'(\rho(\eta)))^2\bigg] \\
	&\phantom{xx}{}+ \pi\bigg[\sum_{\gamma,\delta\in G}c(\eta,\gamma)c(\eta,\delta)
	\na_\gamma\rho(\eta)\na_\delta\phi'(\rho(\eta))\bigg] \\
  &= \pi\bigg[\sum_{\gamma,\delta\in G}R(\eta,\gamma,\delta)\na_\gamma\rho(\eta)
	\widehat\rho_1(\eta,\delta\eta)(\na_\delta\phi'(\rho(\eta)))^2\bigg] \\
	&\phantom{xx}{}+ \pi\bigg[\sum_{\gamma,\delta\in G}\Gamma(\eta,\gamma,\delta)
	\na_\gamma\rho(\eta)\widehat\rho_1(\eta,\delta\eta)
	(\na_\delta\phi'(\rho(\eta)))^2\bigg] \\
	&\phantom{xx}{}+  \pi\bigg[\sum_{\gamma,\delta\in G}c(\eta,\gamma)c(\eta,\delta)
	\na_\gamma\rho(\eta)\na_\delta\phi'(\rho(\eta))\bigg] \\
	&=: B_1 + B_2 + (A_1+A_2).
\end{align*}
Then the left-hand side of \eqref{3.prop} is given by
$$
\pi\big[ \L\phi'(\rho)\L\rho 
	+\phi'' (\rho) (\L\rho)^2\big]
	= (B_1+2A_1) + (B_2+2A_2),
$$
and we will estimate $B_1+2A_1$ and $B_2+2A_2$ separately.

First, we treat $B_2+2A_2$. Inserting the definition of 
$\widehat\rho(\eta,\delta\eta)$ and rearranging the terms, we find that
\begin{align*}
  B_2+2A_2 &=  \pi\bigg[\sum_{\gamma,\delta\in G}\Gamma(\eta,\gamma,\delta)
	\widehat\rho_1(\eta,\delta\eta)\na_\gamma\rho(\eta)
	(\na_\delta\phi'(\rho(\eta)))^2\bigg] \\
	&\phantom{xx}{}+ 2\pi\bigg[\sum_{\gamma,\delta\in G}\Gamma(\eta,\gamma,\delta)
	\widehat\rho(\eta,\delta\eta)\na_\gamma
	\phi'(\rho(\eta))\na_\delta\phi'(\rho(\eta))\bigg] \\
	&= \pi\bigg[\sum_{\gamma,\delta\in G}\Gamma(\eta,\gamma,\delta)\na_\gamma
	\phi'(\rho(\eta))\na_\delta\rho(\eta)\bigg] \\
	&\phantom{xx}{}+ \pi\bigg[\sum_{\gamma,\delta\in G}
	\Gamma(\eta,\gamma,\delta)\na_\gamma\rho(\eta)\na_\delta\rho(\eta)
	\phi''(\rho(\eta))\bigg],
\end{align*}
which is exactly the right-hand side of \eqref{3.prop}. 
Thus, it remains to prove that $B_1+2A_1\ge 0$. 

To this end, we reformulate
$B_1$, employing Assumption \ref{ass} (i)-(ii) and identity \eqref{3.hatrho2}:
\begin{align}
  B_1 &= \pi\bigg[\sum_{\gamma,\delta\in G}R(\eta,\gamma,\delta)
	\widehat\rho_1(\eta,\delta\eta)\na_\gamma\rho(\eta)
	(\na_{\delta}\phi'(\rho(\eta)))^2\bigg] \label{3.aux11} \\
	&= \pi\bigg[\sum_{\gamma,\delta\in G}R(\eta,\gamma,\delta)
	\widehat\rho_1(\delta\eta,\eta)\na_\gamma\rho(\delta\eta)
	(\na_{\delta^{-1}}\phi'(\rho(\delta\eta)))^2\bigg] \nonumber \\
  &= \pi\bigg[\sum_{\gamma,\delta\in G}R(\eta,\gamma,\delta)
	\widehat\rho_2(\eta,\delta\eta)\na_\gamma\rho(\delta\eta)
	(\na_{\delta}\phi'(\rho(\eta)))^2\bigg], \label{3.aux12}
\end{align}
since $\na_{\delta^{-1}}\phi'(\rho(\delta\eta)) 
= -\na_\delta\phi'(\rho(\eta))$. 
Averaging \eqref{3.aux11} and \eqref{3.aux12} gives
$$
  B_1 = \frac12\pi\bigg[\sum_{\gamma,\delta\in G}R(\eta,\gamma,\delta)
	\Big(\widehat\rho_1(\eta,\delta\eta)\na_\gamma\rho(\eta)
	+ \widehat\rho_2(\eta,\delta\eta)\na_\gamma\rho(\delta\eta)\Big)
	(\na_{\delta}\phi'(\rho(\eta)))^2\bigg].
$$
By \eqref{uvst} from Lemma \ref{lem.phi1} (see Appendix)
with $u=\rho(\gamma\eta)$, $v=\rho(\gamma\delta\eta)$, 
$s=\rho(\eta)$, and $t=\rho(\delta\eta)$, it follows that
$$
  \widehat\rho_1(\eta,\delta\eta)\na_\gamma\rho(\eta) 
	+ \widehat\rho_2(\eta,\delta\eta)\na_\gamma\rho(\delta\eta)
  \ge \na_\gamma\widehat\rho(\eta,\delta\eta),
$$
and we infer from the definition of $A_1$ that
\begin{align}
  B_1 + 2A_1 &\ge \frac12\pi\bigg[\sum_{\gamma,\delta\in G}R(\eta,\gamma,\delta)
	\Big\{\na_\gamma\widehat\rho(\eta,\delta\eta)(\na_{\delta}\phi'(\rho(\eta)))^2 
	\label{3.B1A1} \\
	&\phantom{xx}{}
	+	\na_\gamma\big(\widehat\rho(\eta,\delta\eta)\na_\delta\phi'(\rho(\eta))\big)
	\na_\delta\na_\gamma \phi'(\rho(\eta))\Big\}\bigg]. \nonumber
\end{align}

The following identity has been used in the proof of Theorem 3.5
in \cite{FaMa15}:
\begin{align}
  \na_\gamma \widehat\rho (\eta,\delta\eta) & (\na_\delta\psi(\eta))^2
  + \na_\gamma\big(\widehat\rho(\eta,\delta\eta)\na_\delta\psi(\eta)\big)
	\na_\delta\na_\gamma\psi(\eta) \label{3.id} \\
	&= \widehat\rho(\gamma\eta,\gamma\delta\eta)(\na_\gamma\na_\delta\psi(\eta))^2
	- \widehat\rho(\eta,\delta\eta)\na_\delta\psi(\gamma\eta)\na_\delta\psi(\eta)
	\nonumber \\
	&\phantom{xx}{}
	+ \widehat\rho(\gamma\eta,\delta\gamma\eta)\na_\delta\psi(\gamma\eta)
	\na_\delta\psi(\eta). \nonumber
\end{align}
It can be verified by elementary computations. Taking 
$\psi(\eta)= \phi'(\rho(\eta))$, the left-hand side of \eqref{3.id} equals
the expression in the curly brackets of \eqref{3.B1A1}, and we conclude that
\begin{align*}
  B_1+2A_1 &\ge \frac12\pi\bigg[\sum_{\gamma,\delta\in G}R(\eta,\gamma,\delta)
	\widehat\rho(\gamma\eta,\gamma\delta\eta)(\na_\gamma\na_\delta
	\phi'(\rho(\eta)))^2\bigg] \\
	&\phantom{xx}{}- \frac12\pi\bigg[\sum_{\gamma,\delta\in G}R(\eta,\gamma,\delta)
	\widehat\rho(\eta,\delta\eta)\na_\delta
	\phi'(\rho(\gamma\eta))\na_\delta\phi'(\rho(\eta))\bigg] \\
	&\phantom{xx}{}+ \frac12\pi\bigg[\sum_{\gamma,\delta\in G}R(\eta,\gamma,\delta)
	\widehat\rho(\gamma\eta,\delta\gamma\eta)\na_\delta\phi'(\rho(\gamma\eta))
	\na_\delta\phi'(\rho(\eta))\bigg].
\end{align*}
It follows from Assumption \ref{ass} (ii)-(iii) that the second and third term on the
right-hand side cancel. The first term being nonnegative, we infer that
$B_1+2A_1\ge 0$, which concludes the proof.
\end{proof}

The following corollary is a consequence of Proposition \ref{prop}.

\begin{corollary}\label{coro}
\blue{Let $\phi\in C^3((0,\infty);(0,\infty))$ be convex such that $\phi(1)=0$,
$1/\phi''$ is concave on $(0,\infty)$, and let $\theta$, defined in 
\eqref{1.theta}, be concave.}
Suppose that there exists a constant $\lambda>0$ such that for all positive
probability densities $\rho$,
\blue{\begin{align}
  \pi\bigg[ & \sum_{\gamma,\delta\in G}\Gamma(\eta,\gamma,\delta)\Big(
	\na_\gamma\phi'(\rho(\eta))\na_\delta\rho(\eta) 
	+ \phi''(\rho(\eta))\na_\gamma\rho(\eta)
	\na_\delta\rho(\eta)\Big)\bigg] \label{ineq} \\
	&\ge \frac{\lambda}{2}\pi\bigg[\sum_{\gamma\in G}c(\eta,\gamma)
	\na_\gamma\phi'(\rho(\eta))\na_\gamma\rho(\eta)\bigg]. \nonumber
\end{align}}
Then the convex Sobolev inequality \eqref{1.gcsi}, the decay of the Dirichlet form
\begin{equation}\label{3.decay}
  {\mathcal E}(\phi'(e^{t\L}\rho),e^{t\L}\rho)
	\le e^{-\lambda t}{\mathcal E}(\phi'(\rho),\rho), \quad t>0,
\end{equation}
and the decay of the entropy \eqref{1.exp}
hold for all positive probability densities $\rho$.
\end{corollary}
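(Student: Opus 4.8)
The plan is to chain together Proposition \ref{prop}, the standing hypothesis \eqref{ineq}, and the formal Bakry-Emery computation \eqref{1.dEntdt}, and then to read off the three conclusions by elementary Gronwall arguments. The first observation is that the right-hand side of \eqref{ineq} is nothing but $\lambda{\mathcal E}(\phi'(\rho),\rho)$, by the formula \eqref{2.E} for the Dirichlet form. Since the left-hand side of \eqref{ineq} is literally the right-hand side of \eqref{3.prop}, combining Proposition \ref{prop} with the assumption \eqref{ineq} yields
$$
  \pi\big[\L\phi'(\rho)\L\rho + \phi''(\rho)(\L\rho)^2\big]
  \ge \lambda{\mathcal E}(\phi'(\rho),\rho),
$$
which is exactly inequality \eqref{1.ineq}. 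This reduces the corollary to extracting the consequences of \eqref{1.ineq} that were already anticipated in the introduction.

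Next I would fix a positive probability density $\rho$, set $\rho_t=e^{t\L}\rho$, and introduce $H(t)=\textrm{Ent}_\pi^\phi(\rho_t)$ and $D(t)={\mathcal E}(\phi'(\rho_t),\rho_t)$. By \eqref{1.dEntdt} one has $H'(t)=-D(t)$ and $H''(t)=\pi[\L\phi'(\rho_t)\L\rho_t+\phi''(\rho_t)(\L\rho_t)^2]$, so that inequality \eqref{1.ineq} applied at each time $t$ becomes $H''(t)\ge\lambda D(t)=-\lambda H'(t)$. Equivalently $D'(t)=-H''(t)\le-\lambda D(t)$, and Gronwall's lemma gives $D(t)\le e^{-\lambda t}D(0)$, which is precisely the decay of the Dirichlet form \eqref{3.decay}.

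To obtain the convex Sobolev inequality \eqref{1.gcsi}, I would integrate $H'(s)=-D(s)$ over $(t,\infty)$. Because the chain is irreducible and reversible on a finite state space, $\rho_t\to 1$ as $t\to\infty$, whence $H(t)\to\pi[\phi(1)]=0$; combining this limit with the decay just established yields $H(t)=\int_t^\infty D(s)\,ds\le\lambda^{-1}e^{-\lambda t}D(0)$. Evaluating at $t=0$ gives $\lambda\,\textrm{Ent}_\pi^\phi(\rho)\le{\mathcal E}(\phi'(\rho),\rho)$, i.e.\ \eqref{1.gcsi}. Finally, applying \eqref{1.gcsi} along the flow produces $H'(t)=-D(t)\le-\lambda H(t)$, and a second Gronwall argument delivers $H(t)\le e^{-\lambda t}H(0)$, which is the entropy decay \eqref{1.exp}.

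The genuine obstacle is not the ODE bookkeeping but the rigorous justification of the formal identities \eqref{1.dEntdt}. One must check that $\rho_t$ stays strictly positive and bounded along the semigroup, so that $\phi'(\rho_t)$ and $\phi''(\rho_t)$ are well defined and that the finite sum $H(t)$ may be differentiated twice in $t$; on a finite state space with an irreducible reversible generator this follows from the smoothness of $t\mapsto e^{t\L}$ together with strict positivity of $\rho_t$, but these points deserve explicit verification. A secondary issue is confirming the limit $H(\infty)=0$, which is what makes the integration producing \eqref{1.gcsi} legitimate.
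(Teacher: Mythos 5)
Your proposal is correct and follows essentially the same route as the paper: combining Proposition \ref{prop} with hypothesis \eqref{ineq} to obtain \eqref{1.ineq}, then using the Bakry--Emery identities \eqref{1.dEntdt}, Gronwall's lemma, and the limits $\rho_t\to 1$, $\mathrm{Ent}_\pi^\phi(\rho_t)\to 0$ to extract \eqref{1.gcsi}, \eqref{3.decay}, and \eqref{1.exp}. Your write-up is in fact somewhat more explicit than the paper's (which only details the derivation of \eqref{1.gcsi} and leaves the two decay estimates to the Gronwall arguments sketched in the introduction), and your closing remark about rigorously justifying positivity of $\rho_t$ and the differentiability of $t\mapsto\mathrm{Ent}_\pi^\phi(\rho_t)$ identifies precisely the points the paper also leaves implicit.
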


\begin{proof}
By Proposition \ref{prop} and representation \eqref{2.E} of the
Dirichlet form, it follows from \eqref{ineq} that
$$
  \pi[\L\phi'(\rho)\L\rho] + \pi[(\L\rho)^2\phi''(\rho)]
	\ge \lambda{\mathcal E}(\phi'(\rho),\rho).
$$
Taking into account \eqref{1.dEntdt}, this inequality is equivalent to
\begin{equation}\label{2.dd}
  \frac{d^2}{dt^2}\textrm{Ent}_\pi^\phi(\rho_t) 
	\ge -\lambda\frac{d}{dt}\textrm{Ent}_\pi^\phi(\rho_t).
\end{equation}
Using Gronwall's lemma, we infer that 
$0=\lim_{t\to\infty}(-\pa_t\textrm{Ent}_\pi^\phi(\rho_t))$.
Furthermore, as $\pi$ is an invariant measure, $\rho_t\to 1$ and
$\textrm{Ent}_\pi(\rho_t)\to 0$ as $t\to\infty$. 
Therefore, integrating \eqref{2.dd} over
$(0,\infty)$, we conclude that
$$
  -{\mathcal E}(\phi'(\rho_0),\rho_0)
	= \frac{d}{dt}\textrm{Ent}_\pi^\phi(\rho_0) 
	\le -\lambda\textrm{Ent}_\pi^\phi(\rho_0),
$$
and this is exactly the convex Sobolev inequality \eqref{1.gcsi}.
\end{proof}



\section{Examples}\label{sec.ex}

In this section, we consider some stochastic processes analyzed in
\cite{CDP09,FaMa15} but for logarithmic entropies only. 
\blue{For birth-death processes, we are able to allow for general convex
entropies, while for the remaining cases (zero-range processes,
Bernoulli-Laplace models, Random transposition models), only power entropies
with $\phi=\phi_\alpha$ can be considered. The reason is that we need additional
features of $\phi$ that seem to be satisfied only under certain homogeneity
properties. These features are summarized in Lemma \ref{lem.theta1}.} 
Our notation follows that of \cite{CDP09}.

\subsection{Birth-death processes}\label{sec.bdp}

We investigate birth-death processes on $\N=\{0,1,2,\ldots\}$ with generator
$$
  \L f(n) = a(n)\na_+ f(n) + b(n)\na_- f(n), \quad n\in\N,
$$
where $a$ and $b$ are nonnegative functions on $\N$ satisfying $b(0)=0$.
The function $a$ represents the rate of birth, the function $b$ the rate of death.
The set of allowed moves is given by $G=\{+,-\}$, where $+(n)=n+1$ for $n\in\N$ 
and $-(n)=n-1$ for $n\ge 1$, $-(0)=0$. In particular,
$\na_\pm f(n)=f(n\pm 1)-f(n)$. According to the notation of Section \ref{sec.boch},
$c(n,+)=a(n)$ and $c(n,-)=b(n)$. 

\blue{Since we considered in the previous section finite state spaces,
we need to assume that the transition rates $a(n)$ and $b(n)$ vanish
for sufficiently large values of $n$ in order to fit into this framework. 
Another possibility is to consider
finitely supported test functions. According to \cite{Maa16}, this
case may be covered by using the results of Daniri and Savar\'e \cite{DaSa08}.
We expect that the result below still holds for countable Markov chains, 
but we leave the proof for future works; also see \cite[Remark 4.2]{FaMa15}.}

We suppose that this Markov chain is irreducible and reversible, i.e., there
exists a probability measure $\pi$ on $\N$ satisfying the detailed-balance condition
\begin{equation}\label{41.db}
  a(n)\pi(n) = b(n+1)\pi(n+1), \quad n\in\N.
\end{equation}
The following theorem is a consequence of Corollary \ref{coro}, applied
to birth-death processes.

\begin{theorem}\label{thm.bdp}
Let $\lambda>0$ \blue{and let $\phi$ satisfy the assumptions stated in
Proposition \ref{prop}.}
Assume that $a$ is nonincreasing, $b$ is nondecreasing, and
\begin{equation}\label{41.cond}
  a(n)-a(n+1) + b(n+1)-b(n) + \Theta\big(a(n)-a(n+1),b(n+1)-b(n)\big) \ge \lambda
\end{equation}
for all $n\in\N$, where
$$
  \Theta(A,B) := \inf_{s,t>0}\theta(s,t)(A\phi''(s)+B\phi''(t)), 
	\quad A,B\ge 0,
$$
and $\theta(s,t)=(s-t)/(\phi'(s)-\phi'(t))$ for $s\neq t$. 
Then the convex Sobolev inequality \eqref{1.csi} and the decay estimates 
\eqref{1.exp} and \eqref{3.decay} hold with constant $\lambda$.
\end{theorem}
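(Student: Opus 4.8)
The plan is to deduce the theorem from Corollary \ref{coro}. Since $\phi$ is assumed to satisfy the hypotheses of Proposition \ref{prop}, the only tasks are (a) to produce a function $R$ meeting Assumption \ref{ass} for the birth-death generator, and (b) to verify the structural inequality \eqref{ineq} with the stated $\lambda$.

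For (a), as $G=\{+,-\}$ with $+^{-1}=-$, I would try the three-point rates
\[ R(n,+,+)=a(n)a(n+1),\quad R(n,-,-)=b(n)b(n-1),\quad R(n,+,-)=R(n,-,+)=a(n)b(n). \]
Part (i) of Assumption \ref{ass} is immediate. For part (ii), I would first observe that it is equivalent to the detailed-balance-type identity $\pi(n)R(n,\gamma,\delta)=\pi(\gamma n)R(\gamma n,\gamma^{-1},\delta)$ and then check the four cases by one-line computations using \eqref{41.db} (e.g.\ $\pi(n)R(n,+,+)=\pi(n)a(n)a(n+1)=a(n+1)\pi(n+1)b(n+1)=\pi(n+1)R(n+1,-,+)$). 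For part (iii), the only non-commuting moves occur for the mixed pair at the boundary $n=0$, where $-(+0)\neq +(-0)$; but there $R(0,+,-)=a(0)b(0)=0$ since $b(0)=0$, so (iii) holds.

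For (b), I would compute $\Gamma=c\,c-R$, finding $\Gamma(n,+,+)=a(n)(a(n)-a(n+1))$, $\Gamma(n,-,-)=b(n)(b(n)-b(n-1))$, and $\Gamma(n,+,-)=\Gamma(n,-,+)=0$, so only the two diagonal terms survive on the left of \eqref{ineq}. The decisive step is to move the death contribution on the edge $\{n-1,n\}$ onto the edge $\{n,n+1\}$ via $\na_-f(n)=-\na_+f(n-1)$ together with $\pi(n)b(n)=\pi(n-1)a(n-1)$. After reindexing, both the surviving birth term and the converted death term live on $\{n,n+1\}$ with common weight $\pi(n)a(n)$, but the factor $\phi''$ produced by the death term is evaluated at the far endpoint $\rho(n+1)$, not at $\rho(n)$. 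Writing $s=\rho(n)$, $t=\rho(n+1)$ and using the discrete chain rule $\na_+\rho(n)=\theta(s,t)\na_+\phi'(\rho(n))$ from \eqref{3.hatrho}, I can factor $\theta(s,t)(\na_+\phi'(\rho(n)))^2$ out of every term, so the edge contribution to the left of \eqref{ineq} equals $\theta(s,t)(\na_+\phi'(\rho(n)))^2\{(A+B)+\theta(s,t)(A\phi''(s)+B\phi''(t))\}$ with $A=a(n)-a(n+1)\ge0$ and $B=b(n+1)-b(n)\ge0$ (nonnegative by the monotonicity hypotheses). The same edge-wise reduction of the Dirichlet form \eqref{2.E} turns the right-hand side of \eqref{ineq} into $\lambda\,\theta(s,t)(\na_+\phi'(\rho(n)))^2$ on each edge.

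Since $\theta>0$, inequality \eqref{ineq} then reduces to the pointwise bound $(A+B)+\theta(s,t)(A\phi''(s)+B\phi''(t))\ge\lambda$ on every edge. As $\rho$ is arbitrary, $s$ and $t$ range over all of $(0,\infty)$, so I would take the infimum over $s,t$, which is exactly $\Theta(A,B)$; this is precisely hypothesis \eqref{41.cond}, and the three conclusions follow from Corollary \ref{coro}. I expect the main obstacle to be the manipulation in (b): realizing that rewriting the death terms on a common edge forces $\phi''$ to be evaluated at the opposite endpoint is what converts a one-variable estimate into the genuinely two-variable quantity $\Theta(A,B)$. Verifying Assumption \ref{ass}(ii)--(iii), including the boundary bookkeeping at $n=0$, is the other delicate point, though it is comparatively routine.
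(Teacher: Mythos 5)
Your proposal is correct and follows essentially the same route as the paper's proof: the same choice of $R$, the same computation of $\Gamma$ with vanishing mixed terms, the same reindexing of the death contribution onto the edge $\{n,n+1\}$ via detailed balance (producing $\phi''$ evaluated at $\rho(n+1)$), and the same edge-wise reduction to the pointwise bound $(A+B)+\theta(s,t)\bigl(A\phi''(s)+B\phi''(t)\bigr)\ge\lambda$, concluded by Corollary \ref{coro}. Your verification of Assumption \ref{ass}(ii)--(iii), including the boundary case $n=0$ where $b(0)=0$ rescues commutativity, is in fact spelled out in more detail than in the paper, which simply asserts these properties.
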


The mapping $\Theta$ generalizes the function in \cite[Section 4.1]{FaMa15}.
For the special case $\phi(s)=\phi_\alpha(s)=(s^\alpha-s)/(\alpha-1)-s+1$,
Lemma \ref{lem.theta3} in the Appendix shows that 
$\Theta(A,B)\ge (\alpha-1)(A+B)$. Moreover, $\Theta(A,B)=A+B$ if $\alpha=2$.
Figure \ref{fig.Theta} illustrates the ``sharpness'' of the inequality
$\Theta(A,B)\ge (\alpha-1)(A+B)$ for $\alpha$ close to one. 

\begin{figure}[ht]
\centering
\includegraphics[width=70mm]{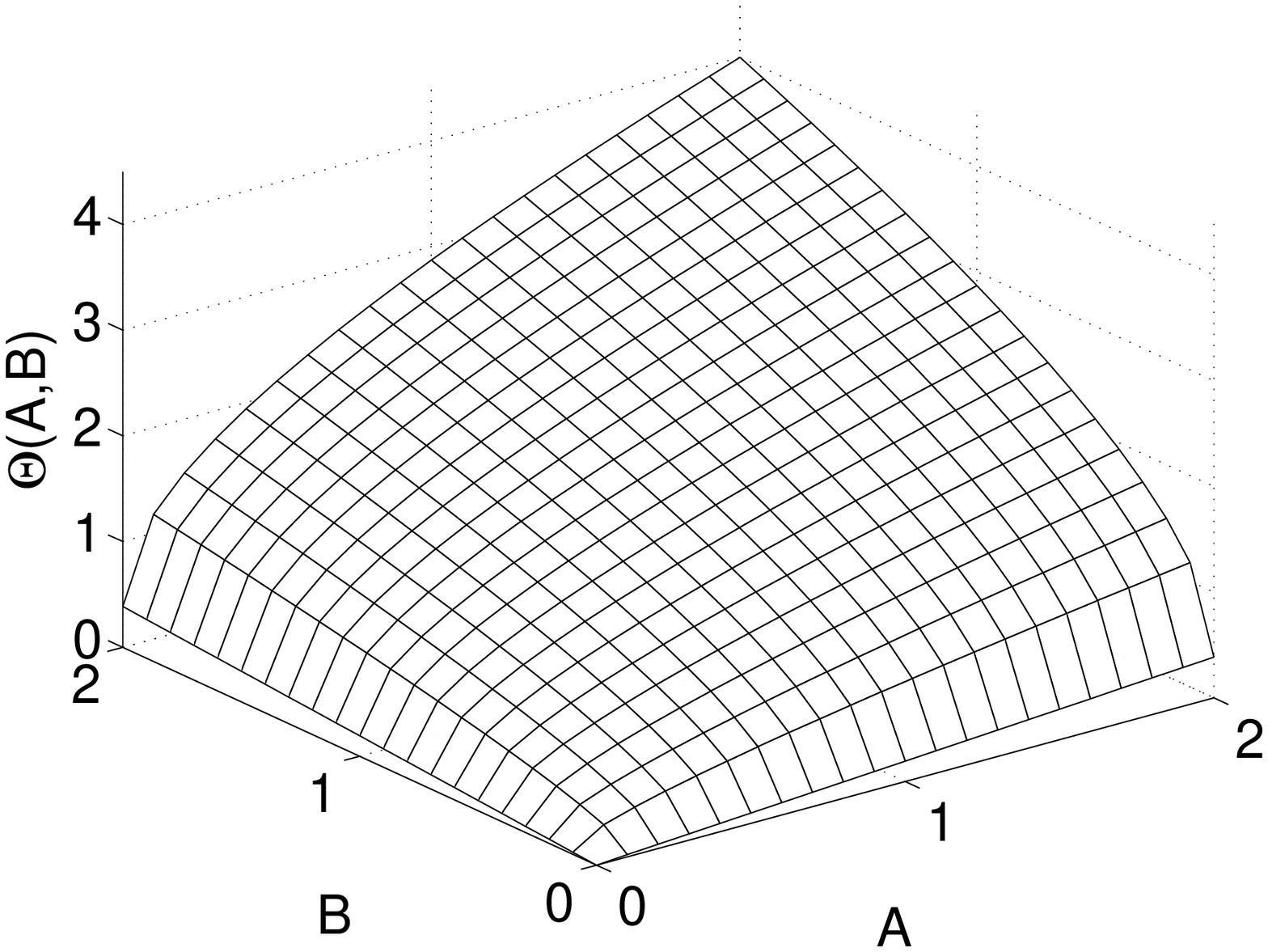}\quad
\includegraphics[width=70mm]{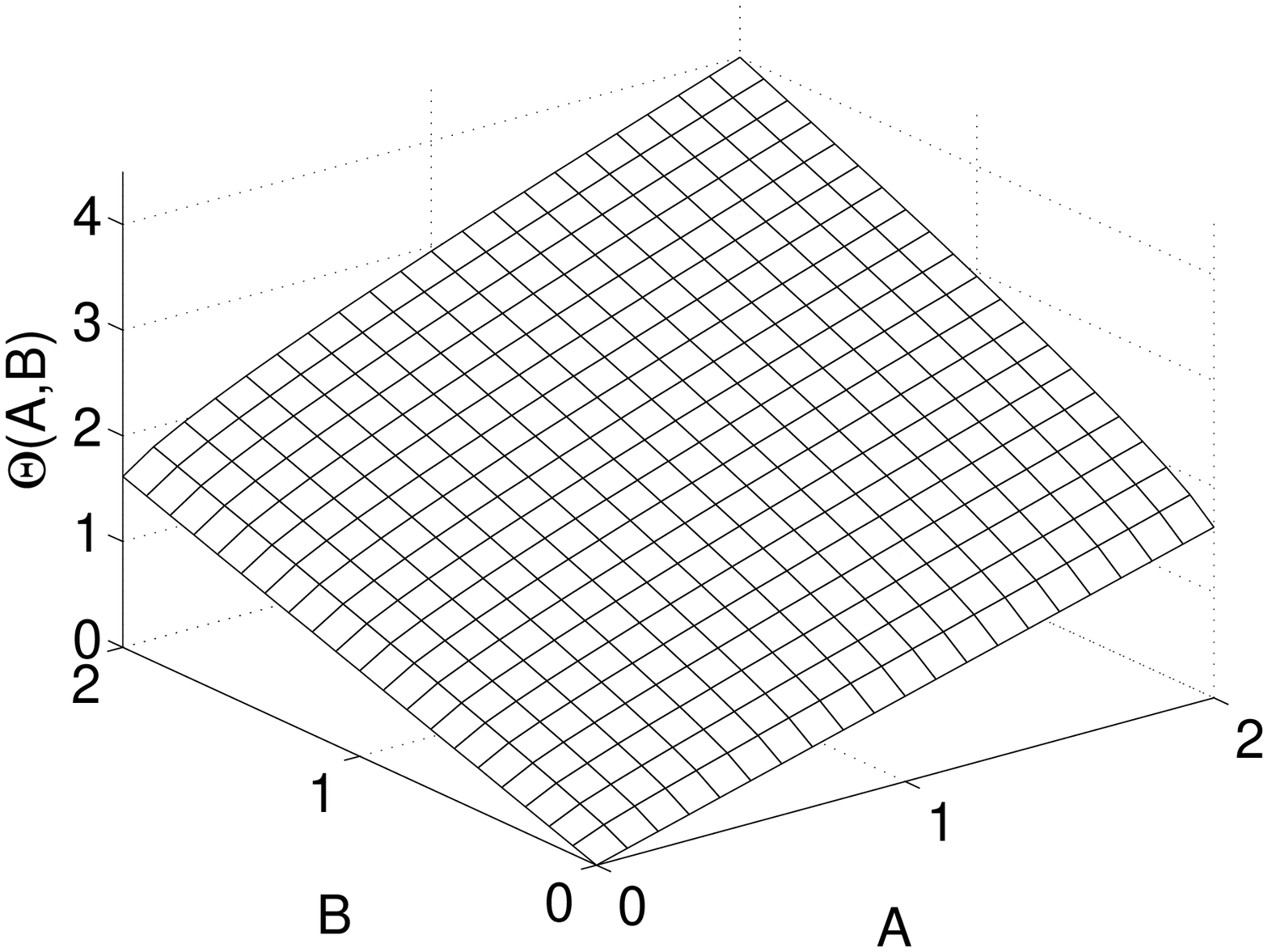}
\caption{Illustration of $\Theta(A,B)$, defined in \eqref{1.theta}, for 
$\alpha=1.01$ (left) and $\alpha=1.8$ (right).}
\label{fig.Theta}
\end{figure}

\begin{remark}\rm
Estimates for Poincar\'e inequalities for Markov chains are given in, e.g., 
\cite{Che96,Che03,Mic99}. 
The same criterion as in \eqref{41.cond} was obtained 
in \cite[Theorem 5.1]{Mie13} and \cite[Theorem 4.1]{FaMa15} for the logarithmic
entropy ($\alpha\to 1$). From Lemma \ref{lem.theta3} we conclude 
that the Beckner constant can be estimated by $\lambda\ge\alpha
(a(n)-a(n+1)+b(n-1)-b(n))$.
There exist sufficient and necessary conditions on $\pi$ and $a(n)$ such that
an interpolation between the Poincar\'e and log-Sobolev inequality 
holds, but without estimates on the constant \cite[Theorem 6.2.4]{Wan05}. 
\qed
\end{remark}

\begin{proof}
We define as in \cite[Section 3]{CDP09}
\begin{align*}
  R(n,+,+) &= a(n)a(n+1), \quad R(n,-,-) = b(n)b(n-1), \\
	R(n,+,-) &= R(n,-,+) = a(n)b(n).
\end{align*}
This function satisfies Assumption \ref{ass}. In particular, (ii) follows from the 
detailed-balance condition \eqref{41.db}. As before, we set
$\Gamma(n,\gamma,\delta) = c(n,\gamma)c(n,\delta) - R(n,\gamma,\delta)$ for
$\gamma$, $\delta\in G$. According to Corollary \ref{coro}, we only need to
verify \eqref{ineq}. The left-hand side equals
\begin{align*}
  \pi\bigg[ & \sum_{\gamma,\delta\in G}\Gamma(n,\gamma,\delta)\Big(
	\na_\gamma\phi'(\rho(n))\na_\delta\rho(n) + \na_\gamma\rho(n)
	\na_\delta\rho(n)\phi''(\rho(n))\Big)\bigg] \\
	&= \pi\Big[a(n)(a(n)-a(n+1))\Big(\na_+\phi'(\rho(n))\na_+\rho(n)
	+ (\na_+\rho(n))^2\phi''(\rho(n))\Big)\Big] \\
	&\phantom{xx}{}
	+ \pi\Big[b(n)(b(n)-b(n-1))\Big(\na_-\phi'(\rho(n))\na_-\rho(n)
	+ (\na_-\rho(n))^2\phi''(\rho(n))\Big)\Big],
\end{align*}
since the sum over all $\gamma$, $\delta\in G$ consists of four terms
$(+,+)$, $(-,-)$, $(+,-)$, and $(-,+)$, and because of 
$\Gamma(n,+,-)=\Gamma(n,-,+)=0$, only two terms do not vanish.
Now, we perform the change $n\mapsto n+1$ in the second term and replace
$\pi(n+1)b(n+1)$ by $\pi(n)a(n)$, according to the detailed-balance 
condition \eqref{41.db}.
Observing that $b(0)=0$ and $\na_-\rho(n+1)=-\na_+\rho(n)$, this leads to
\begin{align}
  \pi\bigg[ & \sum_{\gamma,\delta\in G}\Gamma(n,\gamma,\delta)\Big(
	\na_\gamma\phi'(\rho(n))\na_\delta\rho(n) + \na_\gamma\rho(n)
	\na_\delta\rho(n)\phi''(\rho(n))\Big)\bigg] \nonumber \\
	&= \pi\Big[a(n)(a(n)-a(n+1))\Big(\na_+\phi'(\rho(n))\na_+\rho(n)
	+ (\na_+\rho(n))^2\phi''(\rho(n))\Big)\Big] \nonumber \\
	&\phantom{xx}{}
	+ \pi\Big[a(n)(b(n+1)-b(n))\Big(\na_+\phi'(\rho(n))\na_+\rho(n)
	+ (\na_+\rho(n))^2\phi''(\rho(n+1))\Big)\Big] \nonumber \\
	&= \pi\Big[a(n)\Big(a(n)-a(n+1)+b(n+1)-b(n)\Big)\na_+\phi'(\rho(n))
	\na_+\rho(n)\Big] \nonumber \\
	&\phantom{xx}{}
	+ \pi\Big[a(n)\Big((a(n)-a(n+1))\phi''(\rho(n))
	+ (b(n+1)-b(n))\phi''(\rho(n+1))\Big) \nonumber \\
	&\phantom{xx}{}\times\widehat\rho(n,n+1)\na_+\phi'(\rho(n))
	\na_+\rho(n)\Big] \nonumber \\
  &\ge \lambda\pi\big[a(n)\na_+\phi'(\rho(n))\na_+\rho(n)\big], \nonumber
\end{align}
where in the last step we employed \eqref{41.cond}.
Using again the detailed-balance condition \eqref{41.db} and the identity
$\na_-\rho(n)=-\na_+\rho(n-1)$, the right-hand side
of \eqref{ineq} becomes
\begin{align*}
  \frac{\lambda}{2}\pi\bigg[ & \sum_{\gamma\in G}c(n,\gamma)\na_\gamma
	\phi'(\rho(n))\na_\gamma\rho(n)\bigg] \\
	&= \frac{\lambda}{2}\pi\Big[a(n)\na_+\phi'(\rho(n))\na_+\rho(n)\Big]
	+ \frac{\lambda}{2}\pi\Big[b(n)\na_-\phi'(\rho(n))\na_-\rho(n)\Big] \\
	&= \frac{\lambda}{2}\pi\Big[a(n)\na_+\phi'(\rho(n))\na_+\rho(n)\Big]
	+ \frac{\lambda}{2}\pi\Big[a(n)\na_+\phi'(\rho(n))\na_+\rho(n)\Big] \\
	&= \lambda\pi\big[a(n)\na_+\phi'(\rho(n))\na_+\rho(n)\big].
\end{align*}
Combining the above computations, inequality \eqref{ineq} follows.
\end{proof}


\subsection{Zero-range processes}\label{sec.zrp}

A zero-range process describes a stochastically interacting particle system that
may exhibit phase separation; see, e.g., \cite{MCG12}. The system consists of
finitely many particles moving in a finite number of sites $\{1,2,\ldots,L\}$. 
We adopt the notation of \cite{CDP09}. Let $\eta_x\in\N$ denote the number of
particles at $x\in\{1,2,\ldots,L\}$. Then the state space is $S=\N^L$. 
The configuration is changed by moving a particle from an (occupied) site $x$
to another site $y$. Correspondingly, the set $G$ of allowed moves is given
by self-mappings of $S$ which are of the form $\eta\mapsto\eta^{xy}$, where
$x$, $y\in\{1,2,\ldots,L\}$, $x\neq y$, and
$$
  \eta^{xy}_z = \left\{\begin{array}{ll}
	\eta_z & \mbox{if }z\notin\{x,y\}\mbox{ or }\eta_x=0, \\
	\eta_z-1 & \mbox{for }z=x\mbox{ and }\eta_x>0, \\
	\eta_z+1 & \mbox{for }z=y\mbox{ and }\eta_x>0.
	\end{array}\right.
$$
We denote by $xy$ the mapping $\eta\mapsto\eta^{xy}$ (such that $xy(\eta)=\eta^{xy}$)
and set $\na_{xy}f(\eta)=f(\eta^{xy})-f(\eta)$ for $\eta\in S$. 

The jump rates are functions $c_x:\N\to\R_+$ for $x\in\{1,2,\ldots,L\}$
satisyfing $c_x(0)=0$ and $c_x(n)>0$ for $n>0$. They describe the rate at which
a particle is moved from site $x$ to site $y$, with randomly chosen $y$, 
with uniform probability on $\{1,2,\ldots,L\}$. Then the rate 
$c(\eta,xy)$ for moving a particle from $x$ to $y$ is $c_x(\eta_x)/L$, 
and the generator of the Markov chain becomes
$$
  \L f(\eta) = \frac{1}{L}\sum_{x,y}c_x(\eta_x)\na_{xy}f(\eta),
$$
where the sum extends to all $x$, $y\in\{1,2,\ldots,L\}$. 
The number of particles $N=\sum_{1\le x\le L}\eta_x$ is conserved. We define
the probability measure $\pi_N$ on configurations with $N$ particles by
$$
  \pi_N(\eta) = \frac{1}{Z_N}\prod_{x=1}^L\prod_{k=1}^{\eta_x}\frac{1}{c_x(k)},
$$
where $Z_N>0$ the (finite) normalization constant. Since
\begin{equation}\label{42.db}
  \pi[c_x(\eta_x)g(\eta)] = \pi[c_y(\eta_y)g(\eta^{yx})]
\end{equation}
holds for all functions
$g:S\to\R$, the Markov chain is reversible with respect to $\pi_N$. In the
following, we fix the number of particles $N$ and omit the subscript $N$. 

\begin{theorem}\label{thm.zrp}
\blue{Let $\phi(s)=(s^\alpha-s)/(\alpha-1)-s+1$ and $1<\alpha<2$.}
Assume that there exist constants $0\le\delta<2^{2-\alpha}c$ such that
\begin{equation}\label{42.c}
  c \le c_x(n+1)-c_x(n) \le c+\delta \quad\mbox{for }x\in\{1,2,\ldots,G\},\ n\ge 0.
\end{equation}
Then the Beckner inequality \eqref{1.beck} and the decay estimates \eqref{1.exp}
and \eqref{3.decay} hold with $\lambda=\alpha c-(3+2^{\alpha-2}-\alpha)\delta$.
\end{theorem}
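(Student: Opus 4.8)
The plan is to derive Theorem \ref{thm.zrp} from Corollary \ref{coro} applied with $\phi=\phi_\alpha$. Since $\phi_\alpha$ is convex, satisfies $\phi_\alpha(1)=0$, has concave $1/\phi_\alpha''$, and (by Lemma \ref{lem.theta1}) the associated $\theta$ is concave and homogeneous, the hypotheses of Proposition \ref{prop} and Corollary \ref{coro} are met. It therefore suffices to exhibit a function $R$ satisfying Assumption \ref{ass} and to verify the curvature inequality \eqref{ineq} with the claimed $\lambda$; the Beckner form \eqref{1.beck} then follows from $\phi_\alpha'(s)=\frac{\alpha}{\alpha-1}(s^{\alpha-1}-1)$, which turns ${\mathcal E}(\phi'(\rho),\rho)$ into $\frac{\alpha}{\alpha-1}{\mathcal E}(\rho^{\alpha-1},\rho)$, while the decay estimates \eqref{1.exp} and \eqref{3.decay} are immediate from the corollary.

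First I would choose $R$ exactly as in \cite{CDP09}: to a pair of moves $\gamma=xy$, $\delta=zw$ one assigns the product of the rate of the first jump and the rate of the second jump evaluated on the intermediate configuration, symmetrized in $\gamma,\delta$. Concretely, pairs acting on disjoint sites give $R(\eta,\gamma,\delta)=c(\eta,\gamma)c(\eta,\delta)$, so that $\Gamma=c(\eta,\gamma)c(\eta,\delta)-R$ vanishes on them, while the double move at a single site contributes $R(\eta,xy,xy)=L^{-2}c_x(\eta_x)c_x(\eta_x-1)$ and hence $\Gamma(\eta,xy,xy)=L^{-2}c_x(\eta_x)(c_x(\eta_x)-c_x(\eta_x-1))\ge 0$ by \eqref{42.c}. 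Verifying Assumption \ref{ass} then amounts to checking the symmetry (i), the reversibility identity (ii) via the detailed-balance relation \eqref{42.db}, and the commutation property (iii); I would confirm that the composable ``chained'' moves commute wherever $R>0$ and that $R$ is set to vanish on the configurations where two moves fail to commute (e.g.\ when an intermediate site is empty).

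Next I would insert this $\Gamma$ into the left-hand side of \eqref{ineq}. Because $\Gamma$ is supported on site-sharing move-pairs, the double sum collapses, after using reversibility \eqref{42.db} to symmetrize, to single-move contributions of the form $\pi[c_x(\eta_x)(c_x(\eta_x)-c_x(\eta_x-1))(\na_{xy}\phi'(\rho)\na_{xy}\rho+\phi''(\rho)(\na_{xy}\rho)^2)]$ together with cross terms coupling $\phi''$ at $\eta$ and at $\eta^{xy}$. Using the chain rule $\widehat\rho\,\na_{xy}\phi'(\rho)=\na_{xy}\rho$ from \eqref{3.hatrho} and the homogeneity of $\theta$ (Lemma \ref{lem.theta1}), I would rewrite each summand as a multiple of the Dirichlet-form density $\na_{xy}\phi'(\rho)\na_{xy}\rho$, with the first-order term contributing a factor $1$ and the $\phi''$-term a factor $\alpha-1$ (via the bound $\Theta(A,B)\ge(\alpha-1)(A+B)$ already used for birth-death processes), so that the minimal increment in \eqref{42.c} produces the leading coefficient $\alpha c$.

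The main obstacle is the treatment of the cross terms and the sharp tracking of the rate variation $\delta$. Here one must bound the power mean $\widehat\rho(\eta,\eta^{xy})$ and its derivative $\widehat\rho_1$ against the reference value $\phi''(\rho(\eta))$ uniformly in the two arguments; for $1<\alpha<2$ the elementary power-mean inequality produces the factor $2^{\alpha-2}$, which together with the upper bound $c+\delta$ in \eqref{42.c} yields the negative correction $-(3+2^{\alpha-2}-\alpha)\delta$. Since every summand is weighted by a nonnegative rate under $\pi[\cdot]$, assembling these pointwise estimates gives \eqref{ineq} with $\lambda=\alpha c-(3+2^{\alpha-2}-\alpha)\delta$, and the hypothesis $\delta<2^{2-\alpha}c$ is precisely what keeps this $\lambda$ positive. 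I expect the combinatorics of the interference pairs and the uniform power-mean bound to be the delicate points, the remainder being a weighted sum of the pointwise inequalities already established in the birth-death case.
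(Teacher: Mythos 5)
Your setup coincides with the paper's: invoke Corollary \ref{coro} with $\phi=\phi_\alpha$ and take $R$ from \cite{CDP09}, so that $\Gamma(\eta,xy,uv)$ vanishes unless the two moves have the same source site, in which case $\Gamma=L^{-2}c_x(\eta_x)\big(c_x(\eta_x)-c_x(\eta_x-1)\big)$. Up to that point the proposal is fine. The gap is in the core estimate of \eqref{ineq}. The sum over move-pairs does not ``collapse to single-move contributions'' plus perturbative cross terms: for each source $x$ there are $O(L^2)$ pairs $(xy,xv)$ with $y\neq v$ against only $O(L)$ diagonal pairs, and for these cross terms the quantities $\na_{xv}\rho^{\alpha-1}(\eta)\na_{xy}\rho(\eta)$ and $\phi''(\rho(\eta))\na_{xy}\rho(\eta)\na_{xv}\rho(\eta)$ have no sign, so they cannot be absorbed by the pointwise birth-death mechanism you invoke. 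In particular, the coefficient $\alpha c$ does \emph{not} come from the bound $\Theta(A,B)\ge(\alpha-1)(A+B)$ of Lemma \ref{lem.theta3}; that lemma is used only in Section \ref{sec.bdp}. In the zero-range proof the $\alpha$ arises as $2c-(2-\alpha)c$: the first-order part $C_1$ is bounded below by $2cA$ plus controllable errors (via the telescoping identity $\na_{xv}\rho^{\alpha-1}(\eta)=\na_{xy}\rho^{\alpha-1}(\eta)+\na_{yv}\rho^{\alpha-1}(\eta^{xy})$, reversibility \eqref{42.db}, averaging, and Young's inequality), while the combination $B=(C_2-C_1)/2$ is controlled through the exact Euler identity $s\pa_1\theta_\alpha+t\pa_2\theta_\alpha=(2-\alpha)\theta_\alpha$ (Lemma \ref{lem.theta2} (i)), which is what produces the loss proportional to $(2-\alpha)(c+\delta)$.

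Two further claims in your sketch are incorrect. First, there is no ``uniform power-mean bound'' of $\widehat\rho(\eta,\eta^{xy})$ or $\widehat\rho_1$ against $\phi''(\rho(\eta))$: one has $\theta_\alpha(s,t)\phi_\alpha''(s)=(\alpha-1)s^{\alpha-2}(s-t)/(s^{\alpha-1}-t^{\alpha-1})\sim(t/s)^{2-\alpha}\to\infty$ as $t/s\to\infty$, so the factor $2^{\alpha-2}$ cannot be produced that way. In the paper it comes from Lemma \ref{lem.theta2} (ii), i.e.\ from the concavity together with the $(2-\alpha)$-homogeneity of $\theta_\alpha$, applied only after a further round of reversibility and averaging (the estimate of the term $B_4$). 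Second, the hypothesis $\delta<2^{2-\alpha}c$ is not ``precisely what keeps $\lambda$ positive'': positivity of $\lambda=\alpha c-(3+2^{\alpha-2}-\alpha)\delta$ would require $\delta<\alpha c/(3+2^{\alpha-2}-\alpha)$, which is strictly stronger for $\alpha<2$ (at $\alpha=1.5$ the thresholds are roughly $0.68\,c$ versus $1.41\,c$). The assumption $c\ge 2^{\alpha-2}\delta$ is instead used in an intermediate step, to bound the positive term $B_1$ from below by the quantity $B_3$ that later offsets part of the error $B_4$. So your architecture is the right one, but the estimates you propose would not close; the reversibility/averaging manipulations and Lemma \ref{lem.theta2} (i)--(ii) are the essential ingredients missing from the proposal.
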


\begin{remark}\rm
In the case of constant rates, the spectral gap is of the order of $L^2/(L^2+N^2)$
\cite{Mor06}. Note that our bound $\lambda=2(c-\delta)$ for $\alpha=2$
does not depend on either $L$ or $N$. It was shown in 
\cite{BCDP06} that the lower bound in \eqref{42.c} is sufficient to derive
the spectral-gap estimate $\lambda\ge c$. In the homogeneous case $\delta=0$,
we have even $\lambda=2c$. As pointed out in \cite{CDP09},
a condition on the growth of the rates is necessary for the modified 
logarithmic Sobolev inequality. Our bound $\lambda=c-5\delta/2$ for $\alpha\to 1$
is the same as in \cite[Theorem 4.3]{FaMa15}.
\qed
\end{remark}

\begin{proof}
We define as in \cite[Section 4]{CDP09} the function
$$
  R(\eta,xy,uv) = \frac{1}{L^2}\left\{\begin{array}{ll}
	c_x(\eta_x)c_u(\eta_u) & \mbox{for }x\neq u, \\
	c_x(\eta_x)c_u(\eta_u-1) & \mbox{for }x = u,
	\end{array}\right.
$$
which satisfies Assumption \ref{ass}.
It follows that $\Gamma(\eta,xy,uv)=0$ if $x\neq u$ and
$$
  \Gamma(\eta,xy,uv) = L^{-2}c_x(\eta_x)\big(c_x(\eta_x)-c_x(\eta_x-1)\big)
	\quad\mbox{if }x=u,
$$ 
and the left-hand side of \eqref{ineq} can be written as
\begin{align*}
  \pi\bigg[ & \sum_{\gamma,\delta\in G}\Gamma(\eta,\gamma,\delta)\Big(
	\na_\gamma\rho^{\alpha-1}(\eta)\na_\delta\rho(\eta) + (\alpha-1)\na_\gamma\rho(\eta)
	\na_\delta\rho(\eta)\rho^{\alpha-2}(\eta)\Big)\bigg] \\
	&= \frac{1}{L^2}\pi\bigg[\sum_{x,y,v}c_x(\eta_x)\big(c_x(\eta_x)-c_x(\eta_x-1)\big)
	\na_{xv}\rho^{\alpha-1}(\eta)\na_{xy}\rho(\eta)\bigg] \\
	&\phantom{xx}{}
	+ \frac{\alpha-1}{L^2}\pi\bigg[\sum_{x,y,v}c_x(\eta_x)
	\big(c_x(\eta_x)-c_x(\eta_x-1)\big)
	\na_{xy}\rho(\eta)\na_{xv}\rho(\eta)\rho^{\alpha-2}(\eta)\bigg] \\
  &= C_1 + C_2.
\end{align*}
For future reference, we denote the right-hand side of \eqref{ineq} 
(without the constant $\lambda$) by
$$
  A = \frac{1}{2L}\pi\bigg[\sum_{x,y}c_x(\eta_x)\na_{xy}\rho^{\alpha-1}(\eta)
	\na_{xy}\rho(\eta)\bigg].
$$

The estimate of the term $C_1$ is similar to 
$\widetilde{\mathcal B}_1(\rho,\psi)$
in the proof of Theorem 4.3 in \cite{FaMa15} 
(take $\psi(\eta)=\rho^{\alpha-1}(\eta)$). 
First, we interchange $y$ and $v$ and then use
$\na_{xv}\rho^{\alpha-1}(\eta)=\na_{xy}\rho^{\alpha-1}(\eta)
+\na_{yv}\rho^{\alpha-1}(\eta^{xy})$ as well as the lower bound 
$c_x(\eta_x)-c_x(\eta_x-1)\ge c$:
\begin{align}
  C_1 &= \frac{1}{L^2}\pi\bigg[\sum_{x,y,v}c_x(\eta_x)
	\big(c_x(\eta_x)-c_x(\eta_x-1)\big)\big(\na_{xy}\rho^{\alpha-1}(\eta)
  +\na_{yv}\rho^{\alpha-1}(\eta^{xy})\big)\na_{xy}\rho(\eta)\bigg] \label{42.aux1} \\
  &\ge 2cA + \frac{1}{L^2}\pi\bigg[\sum_{x,y,v}c_x(\eta_x)
	\big(c_x(\eta_x)-c_x(\eta_x-1)\big)\na_{yv}\rho^{\alpha-1}(\eta^{xy})
	\na_{xy}\rho(\eta)\bigg]. \nonumber
\end{align}
Note that the term involving $\na_{xy}\rho^{\alpha-1}(\eta)$ does not
depend on $v$, so the sum over $x$, $y$, $v$ equals $L$ times the sum over
$x$, $y$. Employing the reversibility condition \eqref{42.db} and 
exchanging $x$ and $y$ in the second term yields
\begin{align}
  C_1 &\ge 2cA + \frac{1}{L^2}\pi\bigg[\sum_{x,y,v}c_y(\eta_y)
	\big(c_x(\eta_x^{yx})-c_x(\eta_x^{yx}-1)\big)\na_{yv}\rho^{\alpha-1}(\eta)
	\na_{xy}\rho(\eta^{yx})\bigg] \nonumber \\
	&= 2cA - \frac{1}{L^2}\pi\bigg[\sum_{x,y,v}c_x(\eta_x)
	\big(c_y(\eta_y+1)-c_y(\eta_y)\big)\na_{xv}\rho^{\alpha-1}(\eta)
	\na_{xy}\rho(\eta)\bigg]. \label{42.aux2}
\end{align}
We average \eqref{42.aux1} and \eqref{42.aux2} and employ again the identity
$\na_{xy}\rho^{\alpha-1}(\eta) + \na_{yv}\rho^{\alpha-1}(\eta^{xy})
= \na_{xv}\rho^{\alpha-1}(\eta)$:
\begin{align*}
  C_1 &\ge cA + \frac{1}{2L^2}\pi\bigg[\sum_{x,y,v}c_x(\eta_x)\Big(
	(c_x(\eta_x)-c_x(\eta_x-1)) - (c_y(\eta_y+1)-c_y(\eta_y))\Big) \\
	&\phantom{xx}{}\times\na_{xv}\rho^{\alpha-1}(\eta)\na_{xy}\rho(\eta)\bigg].	
\end{align*}
Setting $C_0:=(c_x(\eta_x)-c_x(\eta_x-1)) - (c_y(\eta_y+1)-c_y(\eta_y))$,
the bounds \eqref{42.c} imply that $|C_0| \le \delta$. 
Hence, by Young's inequality,
\begin{align*}
  C_0\na_{xv}\rho^{\alpha-1}(\eta)\na_{xy}\rho(\eta)
	&= C_0\widehat\rho(\eta,\eta^{xy})\na_{xv}\rho^{\alpha-1}(\eta)
	\na_{xy}\rho^{\alpha-1}(\eta) \\
	&\ge -\frac12|C_0|\widehat\rho(\eta,\eta^{xy})
	\Big((\na_{xy}\rho^{\alpha-1}(\eta))^2 + (\na_{xv}\rho^{\alpha-1}(\eta))^2\Big) \\
	&\ge -\frac{\delta}{2}\Big(\na_{xy}\rho(\eta)\na_{xy}\rho^{\alpha-1}(\eta)
  + (\na_{xv}\rho^{\alpha-1}(\eta))^2\widehat\rho(\eta,\eta^{xy})\Big).
\end{align*}
This yields
\begin{equation}\label{42.C1}
  C_1 \ge \left(c-\frac{\delta}{2}\right)A - \frac{\delta}{4L^2}\pi\bigg[
	\sum_{x,y,v}c_x(\eta_x)(\na_{xv}\rho^{\alpha-1}(\eta))^2\widehat\rho(\eta,\eta^{xy})
	\bigg].
\end{equation}

Next, we rewrite $B=(C_2-C_1)/2$. By definition \eqref{3.hatrho1} of
$\widehat\rho_1$ and the reversibility condition \eqref{42.db},
\begin{align*}
  B &= \frac{1}{2L^2}\pi\bigg[\sum_{x,y,v}c_x(\eta_x)
	\big(c_x(\eta_x)-c_x(\eta_x-1)\big)(\na_{xy}\rho^{\alpha-1}(\eta))^2
	\widehat\rho_1(\eta,\eta^{xy})\na_{xv}\rho(\eta)\bigg] \\
	&= \frac{1}{2L^2}\pi\bigg[\sum_{x,y,v}c_y(\eta_y)
	\big(c_x(\eta_x+1)-c_x(\eta_x)\big)(\na_{xy}\rho^{\alpha-1}(\eta^{yx}))^2
	\widehat\rho_1(\eta^{yx},\eta)\na_{xv}\rho(\eta^{yx})\bigg] \\
  &= \frac{1}{2L^2}\pi\bigg[\sum_{x,y,v}c_x(\eta_x)
	\big(c_y(\eta_y+1)-c_y(\eta_y)\big)(\na_{xy}\rho^{\alpha-1}(\eta))^2
	\widehat\rho_2(\eta,\eta^{xy})\big(\rho(\eta^{xv})-\rho(\eta^{xy})\big)\bigg].
\end{align*}
In the last step, we interchanged $x$ and $y$ and used the identity
$\widehat\rho_1(\eta^{xy},\eta) = \widehat\rho_2(\eta,\eta^{xy})$.
Averaging the expressions for $B$ involving $\widehat\rho_1$ and
$\widehat\rho_2$ gives
\begin{align*}
  B &= \frac{1}{4L^2}\pi\bigg[\sum_{x,y,v}c_x(\eta_x)(\na_{xy}\rho^{\alpha-1}(\eta))^2
	\rho(\eta^{xv}) \\
	&\phantom{xxxx}{}\times\Big(\big(c_x(\eta_x)-c_x(\eta_x-1)\big)
	\widehat\rho_1(\eta,\eta^{xy})
	+ \big(c_y(\eta_y+1)-c_y(\eta_y)\big)\widehat\rho_2(\eta,\eta^{xy})\Big)\bigg] \\
	&\phantom{xx}{}- \frac{1}{4L^2}\pi\bigg[\sum_{x,y,v}c_x(\eta_x)
	(\na_{xy}\rho^{\alpha-1}(\eta))^2 \\
	&\phantom{xxxx}{}\times \Big(\big(c_x(\eta_x)-c_x(\eta_x-1)\big)
	\widehat\rho_1(\eta,\eta^{xy})\rho(\eta) + \big(c_y(\eta_y+1)-c_y(\eta_y)\big)
	\widehat\rho_2(\eta,\eta^{xy})\rho(\eta^{xy})\Big)\bigg] \\
	&= B_1 + B_2.
\end{align*}
The term $B_1$ is estimated by using condition \eqref{42.c} (note that
$\widehat\rho_1$, $\widehat\rho_2\ge 0$ since $\theta$ is nondecreasing in
both variables) and then employing the assumption $c\ge 2^{\alpha-2}\delta$
and interchanging $y$ and $v$:
\begin{align*}
  B_1 &\ge \frac{c}{4L^2}\pi\bigg[\sum_{x,y,v}c_x(\eta_x)
	(\na_{xy}\rho^{\alpha-1}(\eta))^2\rho(\eta^{xv})\big(\widehat\rho_1(\eta,\eta^{xy})
	+ \widehat\rho_2(\eta,\eta^{xy})\big)\bigg] \\
	&\ge \frac{2^{\alpha-2}\delta}{4L^2}\pi\bigg[\sum_{x,y,v}c_x(\eta_x)
	(\na_{xv}\rho^{\alpha-1}(\eta))^2\rho(\eta^{xy})\Big(\widehat\rho_1(\eta,\eta^{xv})
	+ \widehat\rho_2(\eta,\eta^{xv})\Big)\bigg] \\
  &= B_3.
\end{align*}
We employ condition \eqref{42.c} once more and Lemma \ref{lem.theta2} (i) (see
Appendix) to estimate $B_2$:
\begin{align*}
  B_2 &\ge -\frac{c+\delta}{4L^2}\pi\bigg[\sum_{x,y,v}c_x(\eta_x)
	(\na_{xy}\rho^{\alpha-1}(\eta))^2\Big(\widehat\rho_1(\eta,\eta^{xy})\rho(\eta)
	+ \widehat\rho_2(\eta,\eta^{xy})\rho(\eta^{xy})\Big)\bigg] \\
	&= -\frac{c+\delta}{4L^2}(2-\alpha)\pi\bigg[\sum_{x,y,v}c_x(\eta_x)
	(\na_{xy}\rho^{\alpha-1}(\eta))^2\widehat\rho(\eta,\eta^{xy})\bigg] 
	= -\frac12(2-\alpha)(c+\delta)A.
\end{align*}
Consequently, 
\begin{equation}\label{42.B}
  B \ge -\frac12(2-\alpha)(c+\delta)A + B_3.
\end{equation}
We add \eqref{42.C1} and \eqref{42.B}:
\begin{align}
  & C_1 + B \ge \left(c-\frac{\delta}{2}-\frac12(2-\alpha)(c+\delta)\right)A + B_4, 
	\quad\mbox{where} \label{42.C1B} \\
	& B_4 = B_3 - \frac{\delta}{4L^2}\pi\bigg[
	\sum_{x,y,v}c_x(\eta_x)(\na_{xv}\rho^{\alpha-1}(\eta))^2\widehat\rho(\eta,\eta^{xy})
	\bigg]. \nonumber
\end{align}

We wish to estimate $B_4$ from below by a multiple of $A$. To this end, we employ
the reversibility and interchange $x$ and $v$ in the second term in $B_4$:
\begin{align*}
  \pi\bigg[ \sum_{x,y,v}c_x(\eta_x)(\na_{xv}\rho^{\alpha-1}(\eta))^2
	\widehat\rho(\eta,\eta^{xy})\bigg] 
	&= \pi\bigg[\sum_{x,y,v}c_v(\eta_v)(\na_{xv}\rho^{\alpha-1}(\eta^{vx}))^2
	\widehat\rho(\eta^{vx},\eta^{vy})\bigg] \\
  &= \pi\bigg[\sum_{x,y,v}c_x(\eta_x)(\na_{xv}\rho^{\alpha-1}(\eta))^2
	\widehat\rho(\eta^{xv},\eta^{xy})\bigg].
\end{align*}
Then, averaging those two expressions for $B_4$ that involve 
$\widehat\rho(\eta,\eta^{xy})$ and $\widehat\rho(\eta^{xv},\eta^{xy})$,
\begin{align*}
  B_4 &= \frac{\delta}{8L^2}\pi\bigg[\sum_{x,y,v}c_x(\eta_x)
	(\na_{xv}\rho^{\alpha-1}(\eta))^2(2^{\alpha-1}\rho(\eta^{xy}))
	\Big(\widehat\rho_1(\eta,\eta^{xv}) + \widehat\rho_2(\eta,\eta^{xv})\Big)\bigg] \\
	&\phantom{xx}{}- \frac{\delta}{8L^2}\pi\bigg[\sum_{x,y,v}c_x(\eta_x)
	(\na_{xv}\rho^{\alpha-1}(\eta))^2\Big(\widehat\rho(\eta,\eta^{xy})
	+ \widehat\rho(\eta^{xv},\eta^{xy})\Big)\bigg].
\end{align*}
We employ Lemma \ref{lem.theta2} (ii) in the form
\begin{align*}
  2^{\alpha-1}\rho(\eta^{xy}))\big(\widehat\rho_1(\eta,\eta^{xv}) 
	+ \widehat\rho_2(\eta,\eta^{xv})\big)
	- \big(\widehat\rho(\eta,\eta^{xy})	+ \widehat\rho(\eta^{xv},\eta^{xy})\big)
  \ge -2^{\alpha-1}\widehat\rho(\eta,\eta^{xv}),
\end{align*}
which leads to
$$
  B_4 \ge -\frac{2^{\alpha-1}\delta}{8L^2}\pi\bigg[\sum_{x,y,v}c_x(\eta_x)
	(\na_{xv}\rho^{\alpha-1}(\eta))^2\widehat\rho(\eta,\eta^{xv})\bigg]
	= -\frac{2^{\alpha-1}\delta}{4}A.
$$
Hence, we infer from \eqref{42.C1B} that
$$
  C_1 + B \ge \left(c - \frac{\delta}{2} - \frac12(2-\alpha)(c+\delta) 
	- \frac{\delta}{4}2^{\alpha-1}\right)A.
$$
Finally, by definition of $B$,
$$
  C_1 + C_2 = 2(C_1+B) 
	\ge \big(2c - \delta - (2-\alpha)(c+\delta) - 2^{\alpha-2}\delta\big)A = \lambda A.
$$
This shows \eqref{ineq}, and an application of Corollary \ref{coro} finishes the 
proof.
\end{proof}


\subsection{Bernoulli-Laplace models}

We consider again a system of particles moving in a finite set of sizes
$\{1,2,\ldots,L\}$ but in contrast to the previous subsection, we assume
that at most one particle per site is allowed, i.e.\ $S=\{0,1\}^L$. 
The set of allowed moves is $G=\{xy:x,y\in\{1,2,\ldots,L\}$, $x\neq y\}$,
and the moves are of the form $xy:\eta\mapsto \eta^{xy}$ for $\eta\in S$, where
$\eta^{xy}=\eta$ if $\eta_x(1-\eta_y)=0$ and otherwise,
$$
  \eta^{xy}_z = \left\{\begin{array}{ll}
	\eta_z &\mbox{if }z\not\in\{x,y\}, \\
	0 &\mbox{for }z=x, \\
	1 &\mbox{for }z=y.
	\end{array}\right.
$$
We associate to each site $x$ a Poisson clock of constant intensity $\lambda_x>0$.
When the clock of site $x$ rings, we choose randomly a site $y$. If $\eta_x=1$ and
$\eta_y=0$ (i.e.\ if $\eta_x(1-\eta_y)=1$), the particle at $x$ moves to $y$; 
otherwise (i.e.\ if $\eta_x(1-\eta_y)=0$), nothing happens.
Therefore, the transition rates are given by 
$c(\eta,xy) = (\lambda_x/L)\eta_x(1-\eta_y)$, and the generator reads as
$$
  \L f(\eta) = \frac{1}{L}\sum_{xy\in G}\lambda_x\eta_x(1-\eta_y)\na_{xy} f(\eta),
$$
where, as in the previous subsection, $\na_{xy}f(\eta)=f(\eta^{xy})-f(\eta)$.

Let $N\le L$ be the number of particles in the system. There exists a unique
stationary distribution $\pi_N$, which is given by \cite[Section 5]{CDP09}
$$
  \pi_N(\eta) = \frac{1}{Z_{L,N}}\prod_{x=1}^L
	\left(\frac{1}{1+\lambda_x}\right)^{\eta_x}
	\left(\frac{\lambda_x}{1+\lambda_x}\right)^{1-\eta_x},
$$
where $Z_{L,N}>0$ is a normalization constant. In the following,
we write $\pi$ instead of $\pi_N$, as the number of particles is fixed.
Reversibility holds for $\pi$, and it reads as
\begin{equation}\label{43.db}
  \pi\bigg[\sum_{xy\in G}c(\eta,xy)F(\eta,xy)\bigg]
	= \pi\bigg[\sum_{xy\in G}c(\eta,xy)F(\eta^{xy},yx)\bigg]
\end{equation}
for arbitrary functions $F:S\times G\to\R$. 

\begin{theorem}\label{thm.blm}
\blue{Let $\phi(s)=(s^\alpha-s)/(\alpha-1)-s+1$ and $1<\alpha<2$.}
Assume that there exist constants $0\le\delta\le 2^{2-\alpha}c$ such that
\begin{equation}\label{43.c}
  c \le \lambda_x\le c+\delta \quad\mbox{for }x\in\{1,2,\ldots,L\}.
\end{equation}
Then the Beckner inequality \eqref{1.beck} and the decay estimates \eqref{1.exp}
and \eqref{3.decay} hold with $\lambda=\alpha c-(\frac52+2^{\alpha-3}-\alpha)\delta$.
\end{theorem}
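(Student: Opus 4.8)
The plan is to verify the key inequality \eqref{ineq} for the Bernoulli-Laplace model and then invoke Corollary \ref{coro}, exactly as was done for the zero-range process in Theorem \ref{thm.zrp}. The structural parallel is strong: in both cases the set of moves is $G=\{xy\}$, the reversibility takes the form \eqref{43.db}, and the function $R$ should be chosen as in \cite[Section 5]{CDP09}. First I would write down $R(\eta,xy,uv)$ and compute $\Gamma=c(\eta,xy)c(\eta,uv)-R$, showing that $\Gamma$ vanishes unless the two moves share a site, and that on the diagonal $x=u$ it reduces to a factor of the form $L^{-2}\lambda_x\eta_x(1-\eta_y)(\cdots)$. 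This lets me split the left-hand side of \eqref{ineq} into $C_1+C_2$ with $C_1$ coming from the $\na_\gamma\phi'(\rho)\na_\delta\rho$ term and $C_2$ from the $\phi''(\rho)\na_\gamma\rho\na_\delta\rho$ term, and to define the normalized Dirichlet form $A$ as the target right-hand side divided by $\lambda$.

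The bulk of the work then mirrors the zero-range argument. For $C_1$ I would use the additivity identity $\na_{xv}\rho^{\alpha-1}(\eta)=\na_{xy}\rho^{\alpha-1}(\eta)+\na_{yv}\rho^{\alpha-1}(\eta^{xy})$, extract the main term $2cA$ using the lower bound $\lambda_x\ge c$ from \eqref{43.c}, then symmetrize via reversibility \eqref{43.db} and apply Young's inequality together with $|C_0|\le\delta$, where $C_0$ is the relevant difference of rates, to arrive at an estimate of the shape $C_1\ge(c-\tfrac{\delta}{2})A-(\text{error})$. For the symmetric combination $B=(C_2-C_1)/2$ I would introduce $\widehat\rho_1$, $\widehat\rho_2$ as in \eqref{3.hatrho1}--\eqref{3.hatrho2}, average the two representations obtained by interchanging indices, and invoke Lemma \ref{lem.theta2} (parts (i) and (ii)) — the homogeneity properties of the power mean that make $\phi=\phi_\alpha$ special — to control the $B_2$-type and $B_4$-type remainders by multiples of $A$. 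The constants $2^{\alpha-2}$, $2^{\alpha-1}$ will appear through these lemmas and through the factor $2^{\alpha-1}\rho(\eta^{xy})$ arising in the second averaging step.

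The main obstacle I anticipate is bookkeeping of the error terms so that the final constant comes out as $\lambda=\alpha c-(\tfrac52+2^{\alpha-3}-\alpha)\delta$ rather than the zero-range constant $\alpha c-(3+2^{\alpha-2}-\alpha)\delta$. The difference must come from the indicator structure $\eta_x(1-\eta_y)\in\{0,1\}$, which is idempotent and constrains which configurations contribute, so some of the worst-case Young estimates that were needed in the zero-range case are genuinely tighter here. Concretely, I expect that the term analogous to $B_4$ (the $\delta$-weighted remainder collecting the leftover squared gradients) contributes $-2^{\alpha-3}\delta A$ rather than $-2^{\alpha-2}\delta A$ after the reversibility-based symmetrization in $x\leftrightarrow v$, and that tracking exactly where the hard-core exclusion saves a factor of $2$ is the delicate point. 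Collecting $C_1+B\ge(c-\tfrac{\delta}{2}-\tfrac12(2-\alpha)(c+\delta)-2^{\alpha-3}\delta)A$ and using $C_1+C_2=2(C_1+B)$ should yield $2c-\delta-(2-\alpha)(c+\delta)-2^{\alpha-2}\delta=\lambda$ after simplification, which matches the stated value; verifying this arithmetic and confirming the hard-core refinement is where I would concentrate the care.
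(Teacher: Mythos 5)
Your overall frame---choose $R$ as in \cite[Section 5]{CDP09}, reduce everything to \eqref{ineq}, split the left-hand side into $C_1+C_2$, and conclude via Corollary \ref{coro} and Lemma \ref{lem.theta2}---is indeed the paper's frame. But the execution you outline is the zero-range proof transplanted, and it cannot go through here; the paper instead follows the estimates of $J_5$, $J_6$, $J_7$ in the proof of Theorem 4.6 of \cite{FaMa15}. Two structural obstructions: first, condition \eqref{43.c} bounds the rates $\lambda_x$ themselves, not increments of rates, so there is no analogue of your $C_0$ (a difference of rate increments bounded by $\delta$), and the Young-inequality step that produced $C_1\ge(c-\tfrac{\delta}{2})A-(\mathrm{error})$ in Theorem \ref{thm.zrp} has no counterpart. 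Second, $\Gamma(\eta,xy,uv)$ is nonzero whenever the two moves share \emph{any} index, so the pair sum splits into three families, $(xy,xy)$, $(xy,uy)$ (shared target, $x\neq u$) and $(xy,xv)$ (shared source); your sketch only accounts for the diagonal and the shared-source case. The shared-target family, together with the occupation indicators $\eta_x(1-\eta_y)$, is what makes the particle and hole counts enter: the paper's intermediate bounds read $B_2\ge-\tfrac{1}{2L}(N-1)(c+\delta)(2-\alpha)A$, $B_5\ge\tfrac{c}{L}(N-1)A$, $B_6\ge\tfrac{1}{2L}(L-N-1)(2c-\delta)A-B_7$, and so on, and the stated constant only emerges after combining $B_3+B_6$ via Lemma \ref{lem.theta2} (ii), reducing to the case $N\ge L/2$, and using $4-2\alpha\ge0$ and $(1+2^{\alpha-2})N/L\ge(1+2^{\alpha-2})/2$. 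Nothing in your plan produces or controls this dependence on $N$ and $L$.

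Your closing arithmetic also fails, which is the concrete symptom of the mismatch: $2c-\delta-(2-\alpha)(c+\delta)-2^{\alpha-2}\delta$ simplifies to $\alpha c-(3+2^{\alpha-2}-\alpha)\delta$, which is precisely the zero-range constant of Theorem \ref{thm.zrp}, \emph{not} the claimed $\alpha c-(\tfrac52+2^{\alpha-3}-\alpha)\delta$; since $3+2^{\alpha-2}>\tfrac52+2^{\alpha-3}$ for all $1<\alpha<2$, the two never coincide, so your assertion that the simplification ``matches the stated value'' is false. The heuristic that hard-core exclusion tightens the $B_4$-type remainder from $2^{\alpha-2}\delta$ to $2^{\alpha-3}\delta$ is unsubstantiated and is in any case not where the improvement comes from: the better constant arises from the site-counting weights $2$, $N-1$, $L-N-1$ attached to the three families of index pairs and the reduction $N\ge L/2$, not from a pointwise refinement of a Young estimate.
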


\begin{remark}\rm
For the modified log-Sobolev inequality, the bound in \cite{CDP09} 
reads as $\lambda=c-\delta$, and the bound in \cite{FaMa15} equals
$\lambda=c-7\delta/4$ (for $\delta<4c/7$). Our result coincides with that in
\cite{FaMa15} for $\alpha\to 1$. In \cite{GaQu03}, the bound
$1\le\lambda\le 2$ was proved in case $c=1$, $\delta=0$. 
Further bounds, depending on $L$ and $N$,
were collected in \cite[Examples 3.11]{BoTe06}. 

Concerning the Beckner inequality,
Bobkov and Tetali \cite[Section 4]{BoTe06} derived for the homogeneous case
$c=L/(N(L-N))$ and $\delta=0$ the constant $\lambda\ge \alpha(L+2)/(2N(L-N)$.
Our constant $\lambda=(\alpha L-2\alpha+4)/(N(L-N))$ (see the proof below)
is larger for $L>2$ and all $1<\alpha\le 2$.
\qed
\end{remark}

\begin{proof}
We need to verify the condition in Corollary \ref{coro}. As in \cite{CDP09},
we choose
$$
  R(\eta,xy,uv) = L^{-2}\lambda_x\lambda_u\eta_x(1-\eta_y)\eta_u(1-\eta_v)
	\quad\mbox{for }|\{x,y,u,v\}|=4
$$
and $R(\eta,xy,uv)=0$ otherwise. The notation $|\{x,y,u,v\}|=4$ means that
the four variables are pairwise different. Then
$\Gamma(\eta,xy,uv)=0$ if $|\{x,y,u,v\}|=4$ and 
$$
  \Gamma(\eta,xy,uv) =  L^{-2}\lambda_x\lambda_u\eta_x(1-\eta_y)\eta_u(1-\eta_v)
$$
otherwise.
The sum of $\Gamma(\eta,\gamma,\delta)$ over $\gamma$, $\delta\in G$
in the left-hand side of \eqref{ineq} vanishes if $(x,y,u,v)$ are pairwise
different. Therefore, the sum consists of three terms: $(\gamma,\delta)=(xy,xy)$,
$(\gamma,\delta)=(xy,uy)$, and $(\gamma,\delta)=(xy,xv)$, and it follows that
\begin{align*}
  \pi\bigg[ & \sum_{\gamma,\delta\in G}\Gamma(\eta,\gamma,\delta)
	\Big(\na_\gamma\rho^{\alpha-1}(\eta)\na_\delta\rho(\eta)
	+ (\alpha-1)\na_\gamma\rho(\eta)\na_\delta\rho(\eta)\rho^{\alpha-2}(\eta)
	\Big)\bigg] \\
	&= \frac{1}{L^2}\pi\bigg[\sum_{x,y}\lambda_x^2\na_{xy}\rho^{\alpha-1}(\eta)
	\na_{xy}\rho(\eta) 
	+ \sum_{|\{x,y,u\}|=3}\lambda_x\lambda_u\na_{xy}\rho^{\alpha-1}(\eta)
	\na_{uy}\rho(\eta) \\
	&\phantom{xx}{}+ \sum_{|\{x,y,v\}|=3}\lambda_x^2\na_{xy}\rho^{\alpha-1}(\eta)
	\na_{xv}\rho(\eta)\bigg] 
	+ \frac{\alpha-1}{L^2}\pi\bigg[\sum_{x,y}\lambda_x^2\na_{xy}\rho(\eta)
	\na_{xy}\rho(\eta)\rho^{\alpha-2}(\eta) \\
	&\phantom{xx}{}
	+ \sum_{|\{x,y,u\}|=3}\lambda_x\lambda_u\na_{xy}\rho(\eta)\na_{uy}\rho(\eta)
	\rho^{\alpha-2}(\eta)
	+ \sum_{|\{x,y,v\}|=3}\lambda_x^2\na_{xy}\rho(\eta)
	\na_{xv}\rho(\eta)\rho^{\alpha-2}(\eta)\bigg] \\
	&= C_1 + C_2.
\end{align*}

Observe that the right-hand side of \eqref{ineq} (without the constant $\lambda$)
reads as
\begin{equation}\label{43.A}
  A = \frac12\pi\bigg[\sum_{\gamma\in G}c(\eta,\gamma)\na_\gamma\rho^{\alpha-1}(\eta)
	\na_\gamma\rho(\eta)\bigg]
	= \frac{1}{2L}\pi\bigg[\sum_{xy\in G}\lambda_x\na_{xy}\rho^{\alpha-1}(\eta)
	\na_{xy}\rho(\eta)\bigg],
\end{equation}
since $\na_{xy}\rho(\eta)=0$ whenever $\eta_x(1-\eta_y)=0$, so the factor
$\eta_x(1-\eta_y)$ can be omitted.

As in the previous subsection, we estimate $B=(C_2-C_1)/2$, recalling definition
\eqref{3.hatrho1} of $\widehat\rho_1$:
\begin{align}
  B &= \frac{1}{2L^2}\pi\bigg[\sum_{x,y}\lambda_x^2(\na_{xy}\rho^{\alpha-1}(\eta))^2
	\widehat\rho_1(\eta,\eta^{xy})\na_{xy}\rho(\eta)\bigg] \label{43.B1} \\
	&\phantom{xx}{}+ \frac{1}{2L^2}\pi\bigg[ \sum_{|\{x,y,u\}|=3}\lambda_x\lambda_u
	(\na_{xy}\rho^{\alpha-1}(\eta))^2\widehat\rho_1(\eta,\eta^{xy})
	\na_{uy}\rho(\eta)\bigg] \nonumber \\
	&\phantom{xx}{}+ \frac{1}{2L^2}\pi\bigg[ \sum_{|\{x,y,v\}|=3}\lambda_x^2
	(\na_{xy}\rho^{\alpha-1}(\eta))^2\widehat\rho_1(\eta,\eta^{xy})
	\na_{xv}\rho(\eta)\bigg] \nonumber \\
  &= B_1 + B_2 + B_3. \nonumber
\end{align}

The estimations of $B_1$, $B_2$, and $B_3$
are the same as in the proof of Theorem 4.6 in \cite{FaMa15}
after taking $\psi(\eta)=\rho^{\alpha-1}(\eta)$ in 
$\widetilde{\mathcal B}_2(\rho,\psi)$. 
The key point is the use of Lemma \ref{lem.theta2} (iii). In constrast to
\cite{FaMa15}, the factor $2-\alpha$ appears. Therefore, following \cite{FaMa15}
and taking into account \eqref{43.A}, we conclude that
\begin{align}
  B_1 &\ge -\frac{\delta}{2L}(2-\alpha)A, \nonumber \\
	B_2 &\ge -\frac{1}{2L}(N-1)(c+\delta)(2-\alpha)A, \nonumber \\
	B_3 &\ge \frac{c}{4L^2}\pi\bigg[\sum_{|\{x,y,v\}|=3}\lambda_x\eta_x(1-\eta_y)
	(1-\eta_v)(\na_{xy}\rho^{\alpha-1}(\eta))^2\rho(\eta^{xv})
	\nonumber \\
	&\phantom{xx}{}\times\Big(\widehat\rho_1(\eta,\eta^{xy}) 
	+ \widehat\rho_2(\eta,\eta^{xy})\Big)\bigg] 
	- \frac{1}{2L}(L-N-1)(c+\delta)(2-\alpha)A. \label{43.B3}
\end{align}
Since we assumed that $\delta\le 2^{2-\alpha}c$, we can estimate the factor in the
first term of $B_3$ by $c/(4L^2)\ge 2^{\alpha-4}\delta/L^2$.

Next, we estimate $C_1$. This expression consists of three terms. We interchange
$x$ and $u$ in the second term and $y$ and $v$ in the third term. Then
$C_1=B_4+B_5+B_6$, where
\begin{align*}
  B_4 &= \frac{1}{L^2}\pi\bigg[\sum_{x,y}\lambda_x^2\na_{xy}\rho(\eta)
	\na_{xy}\rho^{\alpha-1}(\eta)\bigg], \\
	B_5 &= \frac{1}{L^2}\pi\bigg[\sum_{|\{x,y,u\}|=3}\lambda_x\lambda_u
	\na_{xy}\rho(\eta)\na_{uy}\rho^{\alpha-1}(\eta)\bigg], \\
	B_6 &= \frac{1}{L^2}\pi\bigg[\sum_{|\{x,y,v\}|=3}\lambda_x^2\na_{xy}\rho(\eta)
	\na_{xv}\rho^{\alpha-1}(\eta)\bigg].
\end{align*}
By condition \eqref{43.c}, $B_4\ge (2c/L)A$. 
The term $B_6$ is estimated by employing the reversibility \eqref{43.db},
averaging, and using \eqref{43.c}, similar to the estimate of $J_6$
in the proof of Theorem 4.6 in \cite{FaMa15}. The result is
\begin{align}
  & B_6 \ge \frac{1}{2L}(L-N-1)(2c-\delta)A - B_7, \quad\mbox{where} \label{43.B6} \\
	& B_7 = \frac{\delta}{4L^2}\pi\bigg[\sum_{|\{x,y,v\}|=3}\lambda_x
	\eta_x(1-\eta_y)(1-\eta_v)(\na_{xv}\rho^{\alpha-1}(\eta))^2
	\widehat\rho(\eta,\eta^{xy})\bigg]. \nonumber
\end{align}
Similarly, replacing $\psi(\eta)$ by $\rho^{\alpha-1}(\eta)$ in $J_5$ in the
proof of Theorem 4.6 in \cite{FaMa15}, we have
$B_5\ge (c/L)(N-1)A$.

It remains to rewrite $B_7$. For this, we employ the reversibility, average
the original and the resulting expressions, and interchange $y$ and $v$. This
yields (see the computation of $J_7$ in \cite{FaMa15})
$$
  B_7 = \frac{\delta}{8L^2}\pi\bigg[\sum_{|\{x,y,v\}|=3}\lambda_x\eta_x(1-\eta_y)
	(1-\eta_v)(\na_{xy}\rho^{\alpha-1}(\eta))^2\Big(\widehat\rho(\eta^{xv},\eta^{xy})
	+ \widehat\rho(\eta,\eta^{xv})\Big)\bigg].
$$
Combining estimate \eqref{43.B3} for $B_3$ and \eqref{43.B6}, together with
the above estimate for $B_7$ and applying Lemma \ref{lem.theta2} (ii), we infer that
\begin{align*}
  B_3+B_6 &\ge \frac{1}{2L}(L-N-1)\big(\alpha c-(3-\alpha)\delta\big)A \\
	&\phantom{xx}{}
	+ \frac{\delta}{8L^2}\pi\bigg[\sum_{|\{x,y,v\}|=3}\lambda_x\eta_x(1-\eta_y)
	(1-\eta_v)(\na_{xy}\rho^{\alpha-1}(\eta))^2 \\
	&\phantom{xxxx}{}\times\Big(2^{\alpha-1}
	\rho(\eta^{xv})\big(\widehat\rho_1(\eta,\eta^{xy})
  + \widehat\rho_2(\eta,\eta^{xy})\big) - \big(\widehat\rho(\eta^{xv},\eta^{xy})
	+ \widehat\rho(\eta^{xv},\eta)\big)\Big)\bigg] \\
	&\ge \frac{1}{4L}(L-N-1)
	\big(2\alpha c - 2(3-\alpha)\delta - 2^{\alpha-1}\delta\big)A.
\end{align*}

It remains to summarize the estimates:
\begin{align*}
  C_1 + C_2 &= 2B+2C_1 = 2(B_1+B_2) + 2(B_4+B_5) + 2(B_3+B_6) \\
	&\ge -\frac{(2-\alpha)}{L}\big(\delta+(N-1)(c+\delta)\big)A 
	+ \frac{2}{L}\big(2c+(N-1)c\big)A \\
	&\phantom{xx}{}+ \frac{1}{2L}(L-N-1)\big(2\alpha c - 2(3-\alpha)\delta 
	- 2^{\alpha-1}\delta\big)A \\
	&= \frac{1}{L}\Big((\alpha L+4-2\alpha)c + \big((\alpha-2^{\alpha-2}-3)L 
	+ (1+2^{\alpha-2})N
	+ (3+2^{\alpha-2}-\alpha)\big)\delta\Big)A.
\end{align*}
Arguing as in \cite{FaMa15}, we may suppose that $N\ge L/2$. 
Because of $4-2\alpha\ge 0$, $(1+2^{\alpha-2})N/L \ge (1+2^{\alpha-2})/2$,
and $3+2^{\alpha-2}-\alpha\ge 0$, we infer that
\begin{align*}
  C_1+C_2 
	&\ge \left(\frac{1}{L}(\alpha L+4-2\alpha)c 
	+ \left(\alpha-\frac52-2^{\alpha-3}\right)\delta\right)A \\
  &\ge \left(\alpha c - \left(\frac52+2^{\alpha-3}-\alpha\right)\delta\right)A
\end{align*}
which concludes the proof.
\end{proof}


\subsection{Random transposition model}\label{sec.rtm}

The random transposition model is a random walk on the group of permutations.
Let $S_n$ be the set of permutations on $\{1,2,\ldots,n\}$ and
$T_n$ the set of all transpositions in $S_n$. Given $1\le i,j\le n$,
we denote by $\tau_{ij}\in T_n$ the transposition that interchanges $i$ and $j$,
i.e.\ $\tau_{ij}(i)=j$, $\tau_{ij}(j)=i$, and $\tau_{ij}(k)=k$ for $k\neq i,j$.
The composition of two permutations $\sigma_1$, $\sigma_2\in S_n$ is denoted
by $\sigma_1\sigma_2$.

We define a graph structure on the group $S_n$ by saying that two permutations
are neighbors if they differ by precisely one transposition. Thus every vertex
$\sigma\in S_n$ has $\binom{n}{2}=n(n-1)/2$ neighbors given by 
$\{\tau_{ij}\sigma\}_{1\le i,j\le n}$, and the set of edges is 
$E_n=\{\{\sigma,\tau_{ij}\sigma\}:1\le i,j\le n$, $\sigma\in S_n\}$.
We write $\sigma\leftrightarrow\tau\sigma$ if $\{\sigma,\tau\sigma\}\in E_n$.
The random walk on $(S_n,E_n)$ is then defined by the transition rates
$c(\sigma,\tau)=2/(n(n-1))$ if $\sigma\leftrightarrow\tau\sigma$ and
$c(\sigma,\tau)=0$ otherwise. 
The generator of the Markov chain reads as
$$
  \L f(\sigma) = \frac{2}{n(n-1)}\sum_{\tau\in T_n}\na_\tau f(\sigma),
$$
where $\na_\tau f(\sigma) = f(\tau\circ\sigma)-f(\sigma)$. The uniform
measure $\pi(\sigma)=1/n!$ for all $\sigma\in S_n$ is reversible for the above
transition rates $c(\sigma,\tau)$. 
To simplify the notation, we write $\na_{ij}=\na_\tau$ if $\tau=\tau_{ij}$,
$\sigma_{ij}=\tau_{ij}\circ\sigma$, and $\sigma_{ijk}=\tau_{ij}\circ\tau_{jk}
\circ\sigma$.

\begin{theorem}\label{thm.rtm}
\blue{Let $\phi(s)=(s^\alpha-s)/(\alpha-1)-s+1$ and $1<\alpha<2$.}
For $n\ge 2$, the Beckner inequality \eqref{1.beck} and the decay estimates 
\eqref{1.exp} and \eqref{3.decay} hold with constant $\lambda=8/(n(n-1))$.
\end{theorem}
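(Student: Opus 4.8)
The plan is to invoke Corollary \ref{coro} and thereby reduce the whole statement to verifying the single inequality \eqref{ineq} with $\lambda=8/(n(n-1))$. First I would record that $\phi=\phi_\alpha$ with $1<\alpha<2$ is admissible for Proposition \ref{prop}: it is convex, $\phi_\alpha(1)=0$, $1/\phi''$ is concave, and $\theta$ is concave, all of which hold for the power mean by the appendix lemmas (\ref{lem.phi1}, \ref{lem.theta1}). The geometric input specific to this model is the choice of $R$. Mirroring \cite{CDP09,FaMa15}, I would set $R(\sigma,\tau,\tau')=c(\sigma,\tau)c(\sigma,\tau')=(2/(n(n-1)))^2$ whenever the transpositions $\tau$ and $\tau'$ have disjoint supports, and $R=0$ otherwise. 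Symmetry (i) is immediate; reversibility (ii) holds because $\pi$ is uniform (hence invariant under left multiplication by any $\tau$), the rates are constant, transpositions are involutions ($\tau^{-1}=\tau$), and the disjoint-support set is $\sigma$-independent; the commutation condition (iii) holds since disjoint transpositions commute. Consequently $\Gamma(\sigma,\tau,\tau')=c(\sigma,\tau)c(\sigma,\tau')-R$ vanishes on disjoint pairs and equals $(2/(n(n-1)))^2$ on the \emph{overlapping} pairs, that is, on the diagonal $\tau=\tau'$ and on pairs of distinct transpositions sharing exactly one index.

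With $\Gamma$ so computed, the left-hand side of \eqref{ineq} splits into two contributions. The diagonal part $\tau=\tau'$ yields $(2/(n(n-1)))^2\,\pi[\sum_\tau(\na_\tau\phi'(\rho)\na_\tau\rho+\phi''(\rho)(\na_\tau\rho)^2)]$; dropping the manifestly nonnegative $\phi''$-term, this is already a clean multiple of the Dirichlet form $A:=\frac12\pi[\sum_\tau c(\sigma,\tau)\na_\tau\phi'(\rho)\na_\tau\rho]$, namely $2cA$ with $c=2/(n(n-1))$. The remaining part runs over distinct transpositions $\tau_{mp},\tau_{mq}$ sharing a common index $m$ (with $m,p,q$ pairwise distinct) and produces the genuinely cross terms $\na_{mp}\phi'(\rho)\na_{mq}\rho+\phi''(\rho)\na_{mp}\rho\,\na_{mq}\rho$, which need not be signed.

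The heart of the argument, and the step I expect to be the main obstacle, is the estimation of these single-index-overlap terms, following the template of the Bernoulli-Laplace proof of Theorem \ref{thm.blm}. I would first rewrite the second-order piece using definition \eqref{3.hatrho1} of $\widehat\rho_1$, and then symmetrize: exploiting the reversibility of the uniform $\pi$ I would interchange the roles of the indices in the shared triple $\{m,p,q\}$ and average the resulting expressions, converting $\widehat\rho_1$ into the symmetric combination $\widehat\rho_1+\widehat\rho_2$ and exposing the power mean $\widehat\rho$ through the chain rule $\widehat\rho\,\na\phi'(\rho)=\na\rho$. The key analytic inputs are the power-mean inequalities collected in Lemma \ref{lem.theta2}(i)--(iii), together with Young's inequality to bound the indefinite products $\widehat\rho\,\na_{mp}\phi'(\rho)\na_{mq}\phi'(\rho)$ by squares. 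The subtle point is the bookkeeping of the $S_3$-action of $\tau_{mp},\tau_{mq},\tau_{pq}$ on a common triple and, crucially, the fact that since this model is homogeneous (constant rates, uniform measure) I expect the $\alpha$-dependent contributions of the power mean to recombine into an $\alpha$-\emph{independent} coefficient, in contrast to the inhomogeneous zero-range (Theorem \ref{thm.zrp}) and Bernoulli-Laplace models, where the analogous manipulations produced the $\delta$-dependent corrections.

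Finally, adding the diagonal contribution $2cA$ to the lower bound obtained for the overlap terms, I expect the total to collapse to $\lambda A$ with $\lambda=4c=8/(n(n-1))$, uniformly in $\alpha\in(1,2)$. This verifies \eqref{ineq}, and Corollary \ref{coro} then delivers the Beckner inequality \eqref{1.beck} together with the decay estimates \eqref{1.exp} and \eqref{3.decay}, completing the proof.
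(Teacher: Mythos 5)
Your setup coincides with the paper's: the reduction via Corollary \ref{coro}, the choice of $R$ (equal to $c(\sigma,\tau)c(\sigma,\tau')=4/(n^2(n-1)^2)$ on disjointly supported pairs, zero otherwise), the verification of Assumption \ref{ass}, the resulting $\Gamma$, and the computation that the diagonal first-order part equals $2cA$ with $c=2/(n(n-1))$ are all correct. But there is a genuine gap precisely at the step you flag as ``the main obstacle,'' and your accounting of where the constant comes from is off in a way your toolbox cannot repair. First, you discard the diagonal $\phi''$-term as ``manifestly nonnegative.'' The paper does not discard it: after symmetrizing with reversibility it becomes $B_3=\frac{1}{2n^2(n-1)^2}\pi\big[\sum_{i\neq j}(\na_{ij}\rho^{\alpha-1}(\sigma))^2\na_{ij}\rho(\sigma)\big(\widehat\rho_1(\sigma,\sigma_{ij})-\widehat\rho_2(\sigma,\sigma_{ij})\big)\big]$, which is nonnegative by Lemma \ref{lem.theta2}(iii) with $\lambda_1=\lambda_2=1$, and it is exactly this term that upgrades the diagonal contribution from $2cA$ to $4cA$ (the diagonal part of the second-order sum equals the diagonal part of the first-order sum plus $2B_3$). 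Second, you then need the single-index-overlap terms to supply the remaining $2cA$, but the paper's estimates give only $\ge 0$ for them: reversibility-averaging of the first-order cross terms yields $\frac{2(n-2)}{n(n-1)}A-T$, where $T=\frac{1}{n^2(n-1)^2}\pi\big[\sum_{|\{i,j,k\}|=3}(\na_{ij}\rho^{\alpha-1}(\sigma))^2\widehat\rho(\sigma_{ik},\sigma_{ijk})\big]$ involves the ``far'' mean, while the second-order cross terms, estimated through the concavity inequality \eqref{uvst} of Lemma \ref{lem.phi1} applied at the four points $\rho(\sigma),\rho(\sigma_{ij}),\rho(\sigma_{ik}),\rho(\sigma_{ijk})$, yield exactly $T-\frac{2(n-2)}{n(n-1)}A$; the two bounds cancel identically. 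So the constant $8/(n(n-1))$ arises as (diagonal first-order $=2cA$) plus (diagonal $\phi''$-term $\ge 2cA$) plus (overlap $\ge 0$), not as $2cA$ from the diagonal plus $2cA$ from the overlap, and your claimed lower bound on the overlap terms is unsubstantiated.

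Moreover, the technical plan you sketch for the overlap terms would not deliver any positive multiple of $A$. Young's inequality is the tool of the zero-range proof (Theorem \ref{thm.zrp}), where the slack parameter $\delta$ in \eqref{42.c} absorbs the losses; in the random transposition model there is no slack, and bounding the $\sim n$ indefinite cross products per site by squares produces negative contributions of order $n$ times the Dirichlet form, and in addition squares with mismatched means such as $\widehat\rho(\sigma,\sigma_{ij})(\na_{ik}\rho^{\alpha-1}(\sigma))^2$, which are not controlled by $A$ at all. What replaces Young here is the exact cancellation described above, whose two ingredients you never invoke: the concavity of $\theta$ used quantitatively via \eqref{uvst} (not merely as an admissibility hypothesis for $\phi_\alpha$), and the reappearance of the same far-mean term $T$, with opposite sign, in the averaged first-order cross terms. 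Your intuition that homogeneity makes the final constant $\alpha$-independent is correct, but without these two steps the verification of \eqref{ineq} with $\lambda=8/(n(n-1))$ is not achieved.
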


\begin{remark}\rm
Diaconis and Saloff-Coste \cite[Section 4.3]{DiSa96} report that
the logarithmic So\-bo\-lev constant satisfies the bounds 
$1/(3n\log n)\le\lambda\le 1/(n-1)$; also see \cite[Theorem 1]{GaQu03}. 
Our bound is worse by a factor of $1/n$. The bound $\lambda\ge \alpha(n+2)/(n(n-1))$
was derived in \cite[Section 4]{BoTe06}. It is usually better than our bound
$\lambda=8/(n(n-1))$; for very small numbers of $n$ (namely $n<(8/\alpha)-2$),
our result is superior.
\qed
\end{remark}

\begin{proof}
The right-hand side of \eqref{ineq} (except the factor $\lambda$) can be written as
\begin{equation}\label{44.A}
  A = \frac{1}{n(n-1)}\pi\bigg[\sum_{\tau\in T_n}\na_\tau\rho^{\alpha-1}(\sigma)
	\na_\tau\rho(\sigma)\bigg]
	= \frac{1}{2n(n-1)}\pi\bigg[\sum_{i\neq j}\na_{ij}\rho^{\alpha-1}(\sigma)
	\na_{ij}\rho(\sigma)\bigg],
\end{equation}
where the factor $1/2$ takes into account that every transposition $(i,j)$
is counted twice. As in \cite[Section 4.4]{FaMa15}, we define
$R(\sigma,(i,j),(k,\ell))=4/(n^2(n-1)^2)$ if $|\{i,j,k,\ell\}|=4$ and
$R(\sigma,(i,j),(k,\ell))=0$ otherwise. Then
$\Gamma(\sigma,(i,j),(k,\ell))=0$ if $|\{i,j,k,\ell\}|=4$ and
$$
  \Gamma(\sigma,(i,j),(k,\ell)) = \frac{4}{n^2(n-1)^2}
$$
otherwise. The left-hand side of \eqref{ineq} then becomes
\begin{align*}
  \pi\bigg[ & \sum_{\gamma,\delta}\Gamma(\sigma,\gamma,\delta)\Big(
	\na_\gamma\rho^{\alpha-1}(\sigma)\na_\delta\rho(\sigma) + (\alpha-1)
	\na_\gamma\rho(\sigma)\na_\delta\rho(\sigma)\rho^{\alpha-2}(\sigma)\bigg] \\
	&= \frac{2}{n^2(n-1)^2}\pi\bigg[\sum_{i\neq j}\na_{ij}\rho^{\alpha-1}(\sigma)
	\na_{ij}\rho(\sigma) + 2\sum_{|\{i,j,k\}|=3}\na_{ij}\rho(\sigma)
	\na_{ik}\rho^{\alpha-1}(\sigma)\bigg] \\
	&\phantom{xx}{} + \frac{2(\alpha-1)}{n^2(n-1)^2}\pi\bigg[
	\sum_{i\neq j}\na_{ij}\rho(\sigma)\na_{ij}\rho(\sigma)\rho^{\alpha-2}(\sigma)
	+ 2\sum_{|\{i,j,k\}|=3}\na_{ij}\rho(\sigma)\na_{ik}\rho(\sigma)
	\rho^{\alpha-2}(\sigma)\bigg] \\
	&= C_1 + C_2.
\end{align*}
The expression $C_1$ can be estimated exactly as in the proof of Theorem 4.8
in \cite{FaMa15} using the reversibility and averaging
(see the estimate for $\widetilde{\mathcal B}_1(\rho,\psi)$
for $\psi=\rho^{\alpha-1}$):
$$
  C_1 \ge \frac{2}{n-1}A - \frac{1}{n^2(n-1)^2}\pi\bigg[\sum_{|\{i,j,k\}|=3}
	\big(\rho^{\alpha-1}(\sigma_{ij})-\rho^{\alpha-1}(\sigma)\big)^2
	\widehat\rho(\sigma_{ik},\sigma_{ijk})\bigg].
$$

We estimate now $B=(C_2-C_1)/2$:
\begin{align*}
  B &= \frac{1}{n^2(n-1)^2}\pi\bigg[\sum_{i\neq j}(\na_{ij}\rho^{\alpha-1}(\sigma))^2
	\na_{ij}\rho(\sigma)\widehat\rho_1(\sigma,\sigma_{ij}) \\
	&\phantom{xx}{}+ 2\sum_{|\{i,j,k\}|=3}(\na_{ik}\rho^{\alpha-1}(\sigma))^2
	\na_{ij}\rho(\sigma)\widehat\rho_1(\sigma,\sigma_{ik})\bigg].
\end{align*}
Arguing as for $\widetilde{\mathcal B}_2(\rho,\psi)$ with $\psi=\rho^{\alpha-1}$
in the proof of Theorem 4.8 in \cite{FaMa15}, it follows that
\begin{align*}
  B &= \frac{1}{n^2(n-1)^2}\pi\bigg[\sum_{|\{i,j,k\}|=3}
	(\na_{ij}\rho^{\alpha-1}(\sigma))^2
	\Big(\rho(\sigma_{ik})\widehat\rho_1(\sigma,\sigma_{ij})
	+ \rho(\sigma_{ijk})\widehat\rho_2(\sigma,\sigma_{ij})\Big)\bigg] \\
	&\phantom{xx}{}- \frac{1}{n^2(n-1)^2}\pi\bigg[\sum_{|\{i,j,k\}|=3}
	(\na_{ij}\rho^{\alpha-1}(\sigma))^2
	\Big(\rho(\sigma)\widehat\rho_1(\sigma,\sigma_{ij})
	+ \rho(\sigma_{ij})\widehat\rho_2(\sigma,\sigma_{ij})\Big)\bigg] \\
	&\phantom{xx}{}+ \frac{1}{2n^2(n-1)^2}\pi\bigg[\sum_{i\neq j}
	(\na_{ij}\rho^{\alpha-1}(\sigma))^2
	\na_{ij}\rho(\sigma)\Big(\widehat\rho_1(\sigma,\sigma_{ij})
	- \widehat\rho_2(\sigma,\sigma_{ij})\Big)\bigg] \\
	&= B_1 + B_2 + B_3.
\end{align*}
Property (iii) of Lemma \ref{lem.theta2} (applied with $\lambda_1=\lambda_2=1$) 
implies that $B_3\ge 0$.
Combining $B_1$ and $B_2$, we can apply Lemma \ref{lem.phi1} with 
$s=\rho(\sigma)$, $t=\rho(\sigma_{ij})$, $u=\rho(\sigma_{ik})$, and
$v=\rho(\sigma_{ijk})$, leading to
\begin{align*}
  B &\ge B_1 + B_2 \ge \frac{1}{n^2(n-1)^2}\pi\bigg[\sum_{|\{i,j,k\}|=3}
	(\na_{ij}\rho^{\alpha-1}(\sigma))^2\big(\widehat\rho(\sigma_{ik},\sigma_{ijk})
	-\widehat\rho(\sigma,\sigma_{ij})\big)\bigg] \\
	&= \frac{1}{n^2(n-1)^2}\pi\bigg[\sum_{|\{i,j,k\}|=3}
	(\na_{ij}\rho^{\alpha-1}(\sigma))^2\widehat\rho(\sigma_{ik},\sigma_{ijk})\bigg]
	- \frac{2(n-2)}{n(n-1)}A.
\end{align*}
Adding the estimations for $C_1$ and $B$, one term cancels and we end up with
$$
  C_1+C_2 = 2(C_1+B)
	\ge 2\left(\frac{2}{n-1} - \frac{2(n-2)}{n(n-1)}
	\right)A = \frac{8}{n(n-1)}A.
$$
This concludes the proof.
\end{proof}


\section{Application: Finite-volume discretization of a Fokker-Planck 
equation}\label{sec.fp}

The Bakry-Emery method has been originally applied to Markov diffusion operators
or associated Fokker-Planck equations, and the exponential decay for the
probability densities with an explicit decay rate was shown. In numerical
analysis, the aim is to prove this equilibration property for numerical
discretizations of Fokker-Planck equations. As these discretizations can,
at least in some cases, be interpreted as a Markov chain, one may apply
Markov chain theory to achieve this goal. This was done by Mielke 
\cite[Section 5.3]{Mie13} to prove exponential decay of the logarithmic entropy
for a finite-volume approximation of a Fokker-Planck equation.
The proof is based on diagonal dominance properties of the matrices
appearing in \eqref{1.ineq}.
Our aim is to extend the exponential decay to power-type entropies by combining 
Mielkes results and the estimate for birth-death processes from Theorem \ref{thm.bdp}.
As a by-product, this provides an alternative proof for the
case $\alpha\to 1$ without using matrix algebra.

More specifically,
we consider a finite-volume approximation of the one-dimensional 
Fokker-Planck equation
\begin{equation}\label{5.fpe}
  \pa_t u = \pa_x(\pa_x u + u\pa_x V), \quad t>0, 
	\quad u(\cdot,0)=u_0 \quad\mbox{in }\R,
\end{equation}
where $u(x,t)$ describes some probability density and $V(x)$ is a given potential
satisfying $e^{-V}\in L^1(\R)$.
We introduce the uniform grid $x_n=n/N$, $n\in\Z$, where $N\in\N$.
The quantity $h=1/N$ is the grid size. 
The Fokker-Planck equation has the unique steady state $\pi(x) = Ze^{-V(x)}$,
where $Z>0$ is a normalization constant. The symmetric form of \eqref{5.fpe}, 
$$
  \pa_t u = \pa_x\bigg(\pi\pa_x\bigg(\frac{u}{\pi}\bigg)\bigg),
$$
motivates the following numerical scheme. We integrate this equation over
$[x_{n-1},x_n]$:
$$
  \frac{d}{dt}\,\frac{1}{h}\int_{x_{n-1}}^{x_n}u(x,t)dx
	= \frac{1}{h}\bigg[\pi\pa_x\bigg(\frac{u(\cdot,t)}{\pi}\bigg)\bigg]_{x_{n-1}}^{x_n}.
$$
We choose $u_n$ to approximate $\int_{x_{n-1}}^{x_n}u(\cdot,x)dx/h$,
$\pi_n=\int_{x_{n-1}}^{x_n}\pi(x)dx/h$, and the numerical flux $q_n$ to approximate
$h^{-1}[\pi\pa_x(u/\pi)](x_n)$. We choose as in \cite{Mie13}
$$
  q_n = \frac{\kappa_n}{h^2}\bigg(\frac{u_{n+1}}{\pi_{n+1}}-\frac{u_n}{\pi_n}\bigg), 
	\quad\kappa_n = (\pi_n\pi_{n+1})^{1/2}.
$$
Setting $\rho_n=u_n/\pi_n$, the numerical scheme reads as
\begin{align*}
  \pa_t \rho_n 
	&= \frac{1}{\pi_n}(q_n - q_{n-1}) 
	= \frac{\kappa_n}{h^2\pi_n}(\rho_{n+1}-\rho_n)
	+ \frac{\kappa_{n-1}}{h^2\pi_n}(\rho_{n-1}-\rho_n) \\
	&= a(n)\na_+\rho_n + b(n)\na_-\rho_n,
\end{align*}
where we employed the notation of Section \ref{sec.bdp} and
$a(n)=\kappa_n/(h^2\pi_n)$, $b(n)=\kappa_{n-1}/(h^2\pi_n)$.
The right-hand side can be interpreted as the generator of a birth-death
process on $\Z$. The initial datum is given by $\rho_n(0)=u_n(0)/\pi_n$,
where $u_n(0)=\int_{x_{n-1}}^{x_n}u(x,0)dx/h$.
According to \cite[Section 3.5]{CDP09}, the results of
Theorem \ref{thm.bdp} still hold in that case, and the assumption $b(0)=0$
is clearly not needed. The entropy is given by
$$
  \mathrm{Ent}_\pi^{\phi_\alpha}(\rho) 
	= \frac{1}{\alpha-1}\sum_{n\in\Z}\pi_n(\rho_n^\alpha-1), \quad
	\rho=(\rho_n)_{n\in\Z},\ 1<\alpha\le 2.
$$

\blue{
\begin{theorem}
Let $V\in C^2([0,1])$ and $V''(x)\ge\lambda>0$ for $x\in[0,1]$. 
Then 
$$
  \mathrm{Ent}_\pi^{\phi_\alpha}(\rho(t)) 
	\le \mathrm{Ent}_\pi^{\phi_\alpha}(\rho(0))e^{-2\alpha\lambda_h t}, \quad n\in\N,
$$
where $\lambda_h = 2h^{-2}\Phi(h^2\lambda/8)$ and
$$
  \Phi(s^2) = \frac{3\mathrm{erf}(s)-\mathrm{erf}(3s)}{2\mathrm{erf}(s)} \quad
	\mbox{with}\quad\mathrm{erf}(s)=\frac{2}{\sqrt{p}}\int_0^s e^{-t^2}dt
$$
and $p=3.14159\ldots$ is the number pi (to avoid confusion with the invariant measure
$\pi$). Moreover, the following discrete Beckner inequality holds:
$$
  2\lambda_h\sum_{n\in\Z}\pi_n(\rho_n^\alpha-1)
	\le \sum_{n\in\Z}\frac{\sqrt{\pi_{n+1}\pi_n}}{h^2}
	\big(\rho^{\alpha-1}_{n+1}-\rho^{\alpha-1}_n\big)\big(\rho_{n+1}-\rho_n\big).
$$
\end{theorem}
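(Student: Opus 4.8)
The plan is to apply Theorem \ref{thm.bdp} to the birth-death process with rates $a(n)=\kappa_n/(h^2\pi_n)$, $b(n)=\kappa_{n-1}/(h^2\pi_n)$, $\kappa_n=(\pi_n\pi_{n+1})^{1/2}$, specialized to the power entropy $\phi=\phi_\alpha$ (the extension to the state space $\Z$ being granted by \cite[Section 3.5]{CDP09}, as noted above). Writing $r_n=\pi_{n+1}/\pi_n$, one has $a(n)=h^{-2}r_n^{1/2}$ and $b(n)=h^{-2}r_{n-1}^{-1/2}$, so the monotonicity hypotheses of Theorem \ref{thm.bdp} (that $a$ is nonincreasing and $b$ nondecreasing) are both equivalent to $(r_n)$ being nonincreasing, i.e.\ to log-concavity of the sequence $(\pi_n)$. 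Since $\pi=Ze^{-V}$ with $V$ convex is log-concave and $\pi_n=h^{-1}\int_{x_{n-1}}^{x_n}\pi\,dx$ is an average of a log-concave density over translated intervals, $(\pi_n)$ is log-concave; this is where $V''\ge\lambda>0$ first enters. First I would record these monotonicity facts and then invoke Lemma \ref{lem.theta3}, which for $\phi_\alpha$ gives $\Theta(A,B)\ge(\alpha-1)(A+B)$. Consequently the left-hand side of the curvature condition \eqref{41.cond} is bounded below by $\alpha\,(a(n)-a(n+1)+b(n+1)-b(n))$, so it suffices to produce a constant $\mu$ with $a(n)-a(n+1)+b(n+1)-b(n)\ge\mu$ for every $n$; Theorem \ref{thm.bdp} then yields all assertions with constant $\alpha\mu$.

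The core of the argument is therefore the purely geometric lower bound on $G_n:=a(n)-a(n+1)+b(n+1)-b(n)$, and here I would combine the identity $G_n=h^{-2}\big(r_n^{1/2}-r_{n+1}^{1/2}+r_n^{-1/2}-r_{n-1}^{-1/2}\big)$ with the results of Mielke \cite[Section 5.3]{Mie13}. The key point is that, under the constraint $V''\ge\lambda$, the quantity $G_n$ is extremized by the quadratic potential $V(x)=\frac{\lambda}{2}x^2$, for which $\pi$ is Gaussian and each $r_n$ becomes an explicit ratio of Gaussian integrals. Evaluating these integrals produces the error function, and a direct (if tedious) computation gives $G_n\ge 2\lambda_h$ with $\lambda_h=2h^{-2}\Phi(h^2\lambda/8)$ and $\Phi$ as stated. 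I would cite Mielke for both the worst-case reduction and the explicit evaluation, since the diagonal-dominance estimate he performs is precisely the lower bound on $G_n$; the present route differs only in that the entropy-dependent factor $\Theta$ is disposed of once and for all by Lemma \ref{lem.theta3} rather than by matrix algebra, which is what makes the estimate uniform in $\alpha$ and supplies the alternative proof of the case $\alpha\to1$.

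With $\mu=2\lambda_h$ in hand, Theorem \ref{thm.bdp} furnishes the convex Sobolev inequality \eqref{1.gcsi}, hence the Beckner inequality \eqref{1.beck}, together with the decay estimates \eqref{1.exp} and \eqref{3.decay}, all with constant $2\alpha\lambda_h$; the stated exponential decay of $\mathrm{Ent}_\pi^{\phi_\alpha}(\rho(t))$ is exactly \eqref{1.exp}. For the displayed Beckner inequality I would only rewrite \eqref{1.gcsi} for this chain: using $\phi_\alpha'(s)=\frac{\alpha}{\alpha-1}(s^{\alpha-1}-1)$, the detailed-balance relation \eqref{41.db}, and $\pi_n a(n)=\kappa_n/h^2=\sqrt{\pi_n\pi_{n+1}}/h^2$, the Dirichlet form \eqref{2.E} collapses to $\mathcal{E}(\rho^{\alpha-1},\rho)=\sum_{n}h^{-2}\sqrt{\pi_{n+1}\pi_n}\,(\rho_{n+1}^{\alpha-1}-\rho_n^{\alpha-1})(\rho_{n+1}-\rho_n)$, while $\mathrm{Ent}_\pi^{\phi_\alpha}(\rho)=(\alpha-1)^{-1}\sum_n\pi_n(\rho_n^\alpha-1)$; substituting these and cancelling the common factor $\alpha/(\alpha-1)$ reproduces the inequality in the statement verbatim. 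The main obstacle is the geometric estimate of the second paragraph, namely identifying the Gaussian as the extremal potential and carrying out the error-function computation, which is the one genuinely nontrivial step and the reason Mielke's results are invoked.
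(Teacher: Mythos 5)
Your proposal follows the same skeleton as the paper's proof: reduce everything to Theorem \ref{thm.bdp}, dispose of the entropy-dependent term once and for all via Lemma \ref{lem.theta3} (so that the left-hand side of \eqref{41.cond} is bounded below by $\alpha G_n$, where $G_n:=a(n)-a(n+1)+b(n+1)-b(n)$), and finally rewrite \eqref{1.gcsi} using $\phi_\alpha'(s)=\frac{\alpha}{\alpha-1}(s^{\alpha-1}-1)$, detailed balance, and $\pi_na(n)=\sqrt{\pi_n\pi_{n+1}}/h^2$ to obtain the displayed Beckner inequality; that last paragraph of yours is correct and matches the paper. The gap sits exactly at the step you yourself identify as the core. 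What Mielke proves in \cite[Section 5]{Mie13} --- and what the paper imports from him --- is \emph{not} a lower bound on $G_n$, but the quantitative log-concavity estimate $(1-\lambda_h)\pi_n\ge\sqrt{\pi_{n-1}\pi_{n+1}}$ for the cell averages of $e^{-V}$; this local, three-cell inequality is where the reduction to the quadratic potential and the error-function evaluation actually live. Your claim that ``the diagonal-dominance estimate he performs is precisely the lower bound on $G_n$'' is therefore not accurate, and the asserted extremality of the Gaussian for $G_n$ itself (a quantity involving the four consecutive cells behind $r_{n-1}$, $r_n$, $r_{n+1}$) is an unproven variational statement that appears nowhere in Mielke and cannot simply be cited.

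The missing idea is the conversion of Mielke's local inequality into the uniform bound on $G_n$, and it is not automatic. Mielke's estimate yields $a(n)-a(n+1)\ge\lambda_h\sqrt{\pi_{n+1}/\pi_n}$ and $b(n+1)-b(n)\ge\lambda_h\sqrt{\pi_n/\pi_{n+1}}$. Neither bound is uniform in $n$: for a Gaussian-type $\pi$ one has $\sqrt{\pi_{n+1}/\pi_n}\to 0$ as $n\to+\infty$, and indeed $a(n)-a(n+1)\to 0$ there, so any attempt to bound the two differences separately by a positive constant must fail. What saves the argument is that the two weights are reciprocal: writing $r_n=\pi_{n+1}/\pi_n$, one gets $G_n\ge\lambda_h\bigl(\sqrt{r_n}+1/\sqrt{r_n}\bigr)\ge 2\lambda_h$ by the arithmetic--geometric mean inequality. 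This one line --- reciprocal weights plus AM--GM --- is the paper's own contribution in this proof and is precisely what your proposal lacks; once it replaces the worst-case-reduction claim, your argument coincides with the paper's, giving the constant $2\alpha\lambda_h$ from Lemma \ref{lem.theta3} exactly as you state. (Incidentally, the monotonicity of $a$ and $b$ then also follows directly from Mielke's inequality, since $\lambda_h>0$ forces $a(n)-a(n+1)>0$ and $b(n+1)-b(n)>0$, so your detour through log-concavity of cell averages of a log-concave density, while correct, is not needed.)
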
}

\begin{remark}\rm
We remark that $\lambda_h\nearrow \lambda$ as $h\to 0$ \cite[Corollary 5.5]{Mie13}.
Thus, the decay rate is asymp\-to\-ti\-cally sharp. 
A modified log-Sobolev inequality with constant $\lambda$
for a finite-difference approximation was proved in \cite{Joh15}
for $\lambda$-log-concave potentials
by translating the Bakry-Emery condition to the discrete case.
\qed
\end{remark}

\begin{proof}
Note that $a(n)$ and $b(n)$ satisfy the detailed-balance condition \eqref{41.db}.
The proof is a consequence of Theorem \ref{thm.bdp} and the results of 
Mielke \cite[Section 5]{Mie13}. In particular, he has shown that
$(1-\lambda_h)\pi_n \ge \sqrt{\pi_{n-1}\pi_{n+1}}$. Consequently,
\begin{align*}
  a(n)-a(n+1) &= \sqrt{\frac{\pi_{n+1}}{\pi_n}} - \sqrt{\frac{\pi_{n+2}}{\pi_{n+1}}}
	\ge \lambda_h \sqrt{\frac{\pi_{n+1}}{\pi_{n}}}, \\
	b(n+1)-b(n) &= \sqrt{\frac{\pi_{n}}{\pi_{n+1}}}
	- \sqrt{\frac{\pi_{n-1}}{\pi_{n}}} \ge \lambda_h\sqrt{\frac{\pi_{n}}{\pi_{n+1}}}.
\end{align*}
Using Lemma \ref{lem.theta3} and the relation between the arithmetic and
geometric mean, it follows that
\begin{align*}
  a(n) & -a(n+1)+b(n+1)-b(n) + \Theta\big(a(n)-a(n+1),b(n+1)-b(n)\big) \\
	&\ge \alpha\big(a(n)-a(n+1)+b(n+1)-b(n)\big) \\
	&\ge 2\alpha\sqrt{(a(n)-a(n+1))(b(n+1)-b(n))} \ge 2\alpha\lambda_h.
\end{align*}
Applying Theorem \ref{thm.bdp} concludes the proof.
\end{proof}


\begin{appendix}
\section{Properties of the mean function}\label{sec.app}

We show some properties for 
\begin{equation}\label{a.theta}
  \theta(s,t) = \frac{s-t}{\phi'(s)-\phi'(t)}, \quad 0<s,t,<\infty,\ s\neq t,
\end{equation}
with $\theta(s,s)=1/\phi''(s)$. This function
is symmetric and, if $\phi$ is convex, positive. For the following 
lemma, we introduce for $0<s$, $t<\infty$,
$$
  Y(s,t) = (\phi')^{-1}((1-m)\phi'(s) + m\phi'(t)), \quad 0\le m\le 1.
$$
We set $Y_1=\pa Y/\pa s$, $Y_2=\pa Y/\pa t$, $Y_{11}=\pa^2Y/\pa s^2$, etc.

\blue{
\begin{lemma}[Concavity of $\theta$]\label{lem.phi1}
Let $\phi\in C^3((0,\infty);(0,\infty))$ be convex such that $\phi(1)=0$,
and $1/\phi''$ is concave on $(0,\infty)$.
If $\phi^{(3)}(s)\le 0$ for $s>0$, the function $\theta$, defined in \eqref{a.theta},
is nondecreasing in $s$ and in $t$. Furthermore, if additionally
\begin{equation}\label{ym}
  Y_{11} \le 0, \quad Y_{22} \le 0, \quad
	Y_{11}Y_{22} \ge Y_{12}^2 \quad\mbox{in }(0,\infty)^2, \ m\in(0,1),
\end{equation}
then $\theta$ is concave. In this situation, it holds that for all 
$u$, $v$, $s$, $t>0$,
\begin{equation}\label{uvst}
  \theta(u,v)-\theta(s,t) \le \pa_1\theta(s,t)(u-s) + \pa_2\theta(s,t)(v-t).
\end{equation}
\end{lemma}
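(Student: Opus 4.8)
The plan is to reduce both the monotonicity and the concavity of $\theta$ to properties of $1/\phi''$ via an integral representation. Writing $\psi=\phi'$, which is strictly increasing since $\phi$ is convex, recall that $Y=Y(s,t)=\psi^{-1}((1-m)\psi(s)+m\psi(t))$. First I would establish that, for all $s,t>0$,
\[
  \theta(s,t)=\int_0^1\frac{1}{\phi''(Y(s,t))}\,dm.
\]
This comes from the fundamental theorem of calculus applied to the path $m\mapsto Y(s,t)$: since $Y|_{m=0}=s$ and $Y|_{m=1}=t$, integrating $\pa_m Y=(\psi^{-1})'((1-m)\psi(s)+m\psi(t))(\psi(t)-\psi(s))$ over $m\in[0,1]$ gives $t-s=(\psi(t)-\psi(s))\int_0^1(\psi^{-1})'(\cdots)\,dm$. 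Dividing by $\psi(t)-\psi(s)=\phi'(t)-\phi'(s)$ and using $(\psi^{-1})'(\cdots)=1/\psi'(Y)=1/\phi''(Y)$ yields the representation; the case $s=t$ follows by direct evaluation, $Y\equiv s$ giving $\theta(s,s)=1/\phi''(s)$.

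For the monotonicity I would differentiate under the integral sign to get $\pa_s\theta=\int_0^1(1/\phi'')'(Y)\,\pa_s Y\,dm$. Here $(1/\phi'')'=-\phi^{(3)}/(\phi'')^2\ge 0$ by the assumption $\phi^{(3)}\le 0$, while $\pa_s Y=(\psi^{-1})'(\cdots)(1-m)\phi''(s)\ge 0$; hence $\pa_s\theta\ge 0$, and the same computation in the second variable gives $\pa_t\theta\ge 0$. Note this step uses only $\phi^{(3)}\le 0$, consistent with the lemma's splitting of the hypotheses.

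The concavity is the crux, and the key observation is that condition \eqref{ym} is exactly the statement that each map $(s,t)\mapsto Y(s,t)$ is concave: for a symmetric $2\times2$ matrix the conditions $Y_{11}\le 0$, $Y_{22}\le 0$, $Y_{11}Y_{22}\ge Y_{12}^2$ characterize negative semidefiniteness, i.e.\ concavity of the $C^2$ function $Y$. Since $1/\phi''$ is concave by hypothesis and nondecreasing (its derivative is $\ge 0$, by the same sign computation as above), I would invoke the elementary composition rule that a concave nondecreasing function composed with a concave function is concave; this makes $(s,t)\mapsto(1/\phi'')(Y(s,t))$ concave for each fixed $m\in(0,1)$, and averaging over $m$ in the integral representation preserves concavity. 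I would deliberately use this composition rule rather than a pointwise Hessian computation for $(1/\phi'')\circ Y$, because under the hypothesis $\phi\in C^3$ the function $1/\phi''$ is only $C^1$, so its second derivative need not exist, whereas the composition rule uses only the concavity and monotonicity of $1/\phi''$, both of which are available.

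Finally, inequality \eqref{uvst} is the tangent-plane characterization of concavity. Since $\phi\in C^3$, the function $\theta$ is $C^1$ on the convex domain $(0,\infty)^2$, and a concave $C^1$ function lies below each of its tangent planes; taking base point $(s,t)$ and evaluation point $(u,v)$ gives \eqref{uvst} directly. The main obstacle is the first step, namely recognising the integral representation through $1/\phi''$ and identifying \eqref{ym} as the concavity of $Y$; once these are in place, the remaining arguments are routine.
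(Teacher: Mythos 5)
Your proof is correct, and it shares the paper's starting point---the integral representation $\theta(s,t)=\int_0^1 (1/\phi'')(Y(s,t))\,dm$ and the reduction of concavity of $\theta$ to concavity of $F=(1/\phi'')\circ Y$ for each fixed $m\in(0,1)$---but the core of your concavity argument is genuinely different from the paper's. The paper verifies negative semidefiniteness of the Hessian of $F$ by direct computation: it expresses $F_{11}$, $F_{22}$, $F_{12}$ in terms of $(1/\phi'')''(Y)$, $\phi^{(3)}(Y)$ and the derivatives of $Y$, and checks $F_{11}\le 0$, $F_{22}\le 0$, $F_{11}F_{22}-F_{12}^2\ge 0$ using \eqref{ym} together with the signs $Y_1,Y_2,Y_{12}\ge 0$. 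You instead read \eqref{ym} as the statement that $Y(\cdot,\cdot)$ is concave and invoke the composition rule (a nondecreasing concave function of a concave function is concave), then average over $m$. This buys you two things. First, it is shorter and avoids all second-derivative computations of the composition. Second---as you note yourself---it is more robust under the stated hypotheses: the paper's computation involves $\phi^{(4)}$ and $(1/\phi'')''$, which need not exist for $\phi\in C^3$, whereas your argument uses the concavity of $1/\phi''$ only through its defining inequality and its monotonicity through $(1/\phi'')'=-\phi^{(3)}/(\phi'')^2\ge 0$, both available at the stated regularity; the paper's route would require $\phi\in C^4$ or a mollification step to be fully rigorous. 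Your monotonicity argument also differs mildly: the paper differentiates $\theta$ directly and shows the numerator $G(s,t)=\phi'(s)-\phi'(t)-(s-t)\phi''(s)$ is nonnegative by a sign analysis of $\pa_1 G$, while you differentiate under the integral sign; both are sound. The final step, deducing \eqref{uvst} as the tangent-plane inequality for the concave $C^1$ function $\theta$, coincides with the paper's.
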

}

\begin{proof}
The function $\theta$ is nondecreasing in $s$ if and only if $\pa_1\theta(s,t)\ge 0$.
Since 
$$
  \pa_1\theta(s,t) = \frac{\phi'(s)-\phi'(t)-(s-t)\phi''(s)}{(\phi'(s)-\phi'(t))^2},
$$
it is sufficient to prove the nonnegativity of
$G(s,t) = \phi'(s)-\phi'(t)-(s-t)\phi''(s)$. By assumption, the derivative
$\pa_1 G(s,t) = -(s-t)\phi^{(3)}(s)$ is nonpositive for $s\in(0,t)$ and
nonnegative otherwise. Then $G(s,t)\ge G(t,t)=0$, and the conclusion follows.
The monotonicity in the second variable is shown analogously.

For the proof of the concavity of $\theta$, we observe that
$$
  \theta(s,t) =\int_0^1((\phi')^{-1})'\big((1-m)\phi'(s)+ m\phi'(t)\big)dm.
$$
Thus, the concavity of $\theta$ is equivalent to that one of
$$
  F(s,t) = ((\phi')^{-1})'\big((1-m)\phi'(s)+ m\phi'(t)\big)
	= \frac{1}{\phi''(Y(s,t))}
$$
for any $m\in(0,1)$. Let $0<s$, $t<\infty$ and $0<m<1$. 
We claim that if $\phi^{(3)}\le 0$ and
\eqref{ym} holds, then $F$ is concave. For this, it is sufficient to prove that
$F_{11}=\pa^2 F/\pa s^2\le 0$, $F_{22}=\pa^2 F/\pa t^2\le 0$, and
the determinant of the Hessian of $F$ is nonnegative. 
Because of \eqref{ym} and $\phi''(Y)\ge 0$, $\phi^{(3)}(Y)\le 0$, and
$(1/\phi'')''(Y)\le 0$, we obtain
\begin{align*}
  F_{11} &= -\frac{\phi^{(4)}(Y)}{\phi''(Y)^2}Y_1^2
	+ 2\frac{\phi^{(3)}(Y)^2}{\phi''(Y)^3}Y_1^2
	- \frac{\phi^{(3)}(Y)}{\phi''(Y)^2}Y_{11} 
	= \bigg(\frac{1}{\phi''}\bigg)''(Y)Y_1^2
	- \frac{\phi^{(3)}(Y)}{\phi''(Y)^2}Y_{11} \le 0, \\
	F_{22} &= \bigg(\frac{1}{\phi''}\bigg)''(Y)Y_2^2
	- \frac{\phi^{(3)}(Y)}{\phi''(Y)^2}Y_{22} \le 0.
\end{align*}
Then, using the assumptions and
\begin{align*}
  F_{12} &= F_{21} = -\frac{\phi^{(4)}(Y)}{\phi''(Y)^2}Y_1Y_2
	+ 2\frac{\phi^{(3)}(Y)^2}{\phi''(Y)^3}Y_1Y_2
	- \frac{\phi^{(3)}(Y)}{\phi''(Y)^2}Y_{12} \\
	&= \bigg(\frac{1}{\phi''}\bigg)''(Y)Y_1Y_2	
	- \frac{\phi^{(3)}(Y)}{\phi''(Y)^2}Y_{12}, \\
  Y_1 &= (1-m)\frac{\phi''(s)}{\phi''(Y)} \ge 0, \quad
	Y_2 = m\frac{\phi''(t)}{\phi''(Y)} \ge 0, \\
	Y_{12} &= -m(1-m)\frac{\phi''(s)\phi''(t)\phi^{(3)}(Y)}{\phi''(Y)^3}\ge 0,
\end{align*}
it follows that
\begin{align*}
  F_{11}F_{22}-F_{12}^2
	&= \bigg(\frac{\phi^{(3)}(Y)}{\phi''(Y)^2}\bigg)^2
	\big(Y_{11}Y_{22}-Y_{12}^2\big) \\
	&\phantom{xx}{}+ \frac{\phi^{(3)}(Y)}{\phi''(Y)^2}\bigg(\frac{1}{\phi''}\bigg)''(Y)
	\big(2Y_1Y_2Y_{12} - Y_1^2Y_{22} - Y_2^2Y_{11}\big) \ge 0.
\end{align*}
Finally, inequality \eqref{uvst} follows after Taylor expansion and taking into
account the concavity of $\theta$.
\end{proof}

We claim that the assumptions of Lemma \ref{lem.phi1} are satisfied for
the power mean
$$
  \theta_\alpha(s,t) = \frac{\alpha-1}{\alpha}\frac{s-t}{s^{\alpha-1}-t^{\alpha-1}},
	\quad 1<\alpha<2.
$$

\begin{lemma}\label{lem.theta1}
Let $1<\alpha<2$.
The function $\theta_\alpha$ is $C^\infty$, symmetric, positive, increasing
and concave on $(0,\infty)^2$. Furthermore, $\theta_\alpha$ and its first partial 
derivatives are positive homogenous, i.e.,
$\theta_\alpha(\lambda s,\lambda t)=\lambda^{2-\alpha}\theta_\alpha(s,t)$, 
$\pa_1\theta_\alpha(\lambda s,\lambda t)=\lambda^{1-\alpha}\pa_1\theta_\alpha(s,t)$, 
and $\pa_2\theta_\alpha(\lambda s,\lambda t)
=\lambda^{1-\alpha}\pa_2\theta_\alpha(s,t)$ for all $s$, $t>0$ and $\lambda>0$.
\end{lemma}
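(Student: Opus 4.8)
The plan is to recognize $\theta_\alpha$ as the function $\theta$ of \eqref{a.theta} associated with the choice $\phi=\phi_\alpha$, and to deduce the monotonicity and concavity from Lemma \ref{lem.phi1} after verifying its hypotheses, while treating the elementary properties (smoothness, symmetry, positivity, homogeneity) directly. Concretely, one computes $\phi_\alpha'(s)=\frac{\alpha}{\alpha-1}(s^{\alpha-1}-1)$, $\phi_\alpha''(s)=\alpha s^{\alpha-2}>0$, and $\phi_\alpha^{(3)}(s)=\alpha(\alpha-2)s^{\alpha-3}\le 0$, so that $\phi_\alpha$ is convex with $\phi_\alpha(1)=0$, the reciprocal $1/\phi_\alpha''(s)=\alpha^{-1}s^{2-\alpha}$ is concave (because $0<2-\alpha<1$), and $\phi_\alpha^{(3)}\le 0$. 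All standing hypotheses of Lemma \ref{lem.phi1} are thus in force, and it remains only to check the auxiliary condition \eqref{ym}.

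Symmetry is immediate from the quotient. Positivity follows because $x\mapsto x^{\alpha-1}$ is strictly increasing, so $s-t$ and $s^{\alpha-1}-t^{\alpha-1}$ share the same sign while the prefactor $(\alpha-1)/\alpha$ is positive; on the diagonal $\theta_\alpha(s,s)=\alpha^{-1}s^{2-\alpha}>0$. For the $C^\infty$ property I would sidestep the apparent $0/0$ behaviour on $\{s=t\}$ by means of the integral representation $\theta_\alpha(s,t)=\int_0^1\big((\phi_\alpha')^{-1}\big)'\big((1-m)\phi_\alpha'(s)+m\phi_\alpha'(t)\big)\,dm$ already recorded in the proof of Lemma \ref{lem.phi1}: since $\phi_\alpha'$ is a smooth diffeomorphism of $(0,\infty)$, the integrand is jointly smooth and hence so is $\theta_\alpha$. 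The homogeneity $\theta_\alpha(\lambda s,\lambda t)=\lambda^{2-\alpha}\theta_\alpha(s,t)$ is read off after pulling $\lambda$ and $\lambda^{\alpha-1}$ out of numerator and denominator, and differentiating this identity in $s$ and in $t$ yields the claimed $\lambda^{1-\alpha}$ scaling of $\pa_1\theta_\alpha$ and $\pa_2\theta_\alpha$.

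The heart of the matter is \eqref{ym}. Writing $p=\alpha-1\in(0,1)$ and inverting $\phi_\alpha'$ gives the clean closed form
$$Y(s,t)=(\phi_\alpha')^{-1}\big((1-m)\phi_\alpha'(s)+m\phi_\alpha'(t)\big)=\big((1-m)s^{p}+m t^{p}\big)^{1/p},$$
that is, $Y$ is exactly the weighted power mean of $s$ and $t$; securing this explicit expression is the key simplification. Setting $w=(1-m)s^p+mt^p$ and differentiating, I expect
$$Y_{11}=m(1-m)(p-1)s^{p-2}t^{p}w^{1/p-2},\quad Y_{22}=m(1-m)(p-1)s^{p}t^{p-2}w^{1/p-2},$$
$$Y_{12}=m(1-m)(1-p)s^{p-1}t^{p-1}w^{1/p-2}.$$
Since $p-1<0$, both $Y_{11}$ and $Y_{22}$ are negative, and because $(p-1)^2=(1-p)^2$ one finds $Y_{11}Y_{22}=Y_{12}^2$, so the Hessian determinant condition in \eqref{ym} holds with equality. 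With \eqref{ym} and $\phi_\alpha^{(3)}\le 0$ established, Lemma \ref{lem.phi1} delivers that $\theta_\alpha$ is nondecreasing in each variable and concave; strict monotonicity can be appended from $\pa_1\theta_\alpha>0$ for $s\neq t$.

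The main obstacle I anticipate is purely computational: extracting the power-mean form of $Y$ and then arranging the second derivatives so that the cancellations $w-(1-m)s^p=mt^p$ and $(p-1)^2=(1-p)^2$ are transparent. Once $Y$ is identified as $\big((1-m)s^p+mt^p\big)^{1/p}$, no genuine inequality is required for the determinant condition --- it becomes an identity --- and the only sign to track is that of $p-1$. I would therefore concentrate the care on the derivative bookkeeping and on the smoothness argument across the diagonal, both routine but error-prone.
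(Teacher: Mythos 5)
Your proposal is correct and follows essentially the same route as the paper: both verify the hypotheses of Lemma \ref{lem.phi1} for $\phi=\phi_\alpha$, identify $Y$ explicitly as the weighted power mean $\big((1-m)s^{\alpha-1}+mt^{\alpha-1}\big)^{1/(\alpha-1)}$, and check condition \eqref{ym} by computing $Y_{11}$, $Y_{22}$, $Y_{12}$, with the determinant condition holding as the identity $Y_{11}Y_{22}-Y_{12}^2=0$. Your computations agree with the paper's (after the substitution $p=\alpha-1$), and you additionally spell out the smoothness across the diagonal and the homogeneity claims, which the paper dismisses as elementary.
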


\begin{proof}
The regularity, symmetry, and positivity of $\theta_\alpha$ follow from elementary
computations. The monotonicity follows from 
$\phi_\alpha^{(3)}(s)=\alpha(\alpha-2)s^{\alpha-3}<0$ for $s>0$. 
To show that $\theta_\alpha$
is concave, we verify the conditions of Lemma \ref{lem.phi1}. We compute
\begin{align*}
  Y(s,t) &= \big((1-m)s^{\alpha-1} + mt^{\alpha-1}\big)^{1/(\alpha-1)}, \\
	Y_{11}(s,t) &= -m(1-m)(2-\alpha)(st)^{\alpha-3}Y(s,t)^{3-2\alpha}t^2, \\
	Y_{22}(s,t) &= -m(1-m)(2-\alpha)(st)^{\alpha-3}Y(s,t)^{3-2\alpha}s^2, \\
	Y_{12}(s,t) &= m(1-m)(2-\alpha)(st)^{\alpha-3}Y(s,t)^{3-2\alpha}st,
\end{align*}
and it follows that $Y_{11}\le 0$, $Y_{22}\le 0$, and $Y_{11}Y_{22}-Y_{12}^2=0$.
\end{proof}

We prove more properties of $\theta_\alpha$, needed in 
Sections \ref{sec.zrp}-\ref{sec.rtm}.

\begin{lemma}[Properties of $\theta_\alpha$]\label{lem.theta2}
Let $1<\alpha<2$.
The function $\theta_\alpha$ satisfies for all $r$, $s$, $t>0$ and $\lambda_1$,
$\lambda_2>0$, \\
{\rm (i)} $s\pa_1\theta_\alpha(s,t) + t\pa_2\theta_\alpha(s,t) 
= (2-\alpha)\theta_\alpha(s,t)$; \\
{\rm (ii)} $2^{\alpha-1}r(\pa_1\theta_\alpha(s,t)+\pa_2\theta_\alpha(s,t)) 
- (\theta_\alpha(r,s) + \theta_\alpha(r,t)) \ge -2^{\alpha-1}\theta_\alpha(s,t)$; \\
{\rm (iii)} $\lambda_1\pa_1\theta_\alpha(s,t)(s-t) 
- \lambda_2\pa_2\theta_\alpha(s,t)(s-t)
\le (2-\alpha)|\lambda_1-\lambda_2|\theta_\alpha(s,t)$.
\end{lemma}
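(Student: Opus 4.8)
The plan is to prove the three properties in the order (i), then (iii), then (ii), since (i) feeds directly into (iii), and a single monotonicity fact established along the way is used in both (iii) and (ii). Property (i) is immediate from Euler's relation for homogeneous functions: by Lemma \ref{lem.theta1}, $\theta_\alpha$ is positively homogeneous of degree $2-\alpha$, so differentiating $\theta_\alpha(\lambda s,\lambda t)=\lambda^{2-\alpha}\theta_\alpha(s,t)$ in $\lambda$ at $\lambda=1$ yields $s\pa_1\theta_\alpha(s,t)+t\pa_2\theta_\alpha(s,t)=(2-\alpha)\theta_\alpha(s,t)$, which is exactly (i).

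The auxiliary fact I would isolate next, and use repeatedly, is that $\pa_1\theta_\alpha(s,t)\le\pa_2\theta_\alpha(s,t)$ whenever $s>t$. I would prove this by restricting $\theta_\alpha$ to an antidiagonal line: setting $g(u)=\theta_\alpha(m+u,m-u)$ with $2m=s+t$ fixed, the symmetry of $\theta_\alpha$ makes $g$ even, so $g'(0)=0$, while the concavity of $\theta_\alpha$ (Lemma \ref{lem.theta1}) makes $g$ concave, hence $g'$ nonincreasing. Since $g'(u)=\pa_1\theta_\alpha-\pa_2\theta_\alpha$ evaluated at $(m+u,m-u)=(s,t)$, it follows that $g'(u)\le g'(0)=0$ for $u>0$, which is the claim. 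Property (iii) then follows from a short sign analysis: by the $s\leftrightarrow t$ symmetry of the statement I may assume $s>t$, so $s-t>0$. If $\lambda_1\pa_1\theta_\alpha\le\lambda_2\pa_2\theta_\alpha$, the left-hand side of (iii) is $\le0$ and there is nothing to prove; otherwise the auxiliary fact (together with $\pa_i\theta_\alpha\ge0$) forces $\lambda_1>\lambda_2$, and then the two estimates $\lambda_1\pa_1\theta_\alpha-\lambda_2\pa_2\theta_\alpha\le(\lambda_1-\lambda_2)\pa_1\theta_\alpha$ and $(s-t)\pa_1\theta_\alpha\le s\pa_1\theta_\alpha\le s\pa_1\theta_\alpha+t\pa_2\theta_\alpha=(2-\alpha)\theta_\alpha$ combine, the last equality being (i), to give the bound with $|\lambda_1-\lambda_2|=\lambda_1-\lambda_2$.

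The main obstacle is (ii). The natural idea is to bound the concave function $\theta_\alpha$ from above by a tangent plane via \eqref{uvst}. Homogeneity dictates the base point: Lemma \ref{lem.theta1} gives $\theta_\alpha(s/2,t/2)=2^{\alpha-2}\theta_\alpha(s,t)$ and $\pa_i\theta_\alpha(s/2,t/2)=2^{\alpha-1}\pa_i\theta_\alpha(s,t)$, so expanding both $\theta_\alpha(r,s)$ and $\theta_\alpha(r,t)$ about $(s/2,t/2)$ and summing reproduces precisely the factors $2^{\alpha-1}$ on the right-hand side of (ii). Carrying this out, the claimed inequality reduces to the sign condition $(s-r)\bigl(\pa_1\theta_\alpha(s/2,t/2)-\pa_2\theta_\alpha(s/2,t/2)\bigr)\ge0$, which by the auxiliary fact holds for $r\ge\max(s,t)$; the mirror base point $(t/2,s/2)$ reduces to $(r-t)\bigl(\pa_1\theta_\alpha-\pa_2\theta_\alpha\bigr)\ge0$ and covers $r\le\min(s,t)$.

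The genuine difficulty is the intermediate range $\min(s,t)<r<\max(s,t)$, where the single tangent-plane estimate is too weak and neither base point suffices. Here I expect to argue differently: the map $r\mapsto$(right-hand side minus left-hand side of (ii)) is convex, being linear in $r$ minus the two concave functions $r\mapsto\theta_\alpha(r,s)$ and $r\mapsto\theta_\alpha(r,t)$, so it is enough to control its minimizer and show nonnegativity there. Locating that minimizer and verifying the sign appears to require the explicit power-mean formula for $\theta_\alpha$; after normalizing by homogeneity (for instance setting $t=1$) this collapses to a one-parameter, elementary but delicate inequality in $r$ and $s$. This explicit reduction on the middle range is the step I expect to be the most technical, and it is the part of the argument that genuinely uses the special structure of $\phi_\alpha$ rather than just convexity, symmetry, and concavity of $\theta$.
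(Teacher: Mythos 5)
Your treatment of (i) and (iii) is correct. For (i), Euler's relation applied to the $(2-\alpha)$-homogeneity from Lemma \ref{lem.theta1} replaces the paper's ``elementary computation''. Your antidiagonal fact, $(s-t)\big(\pa_2\theta_\alpha(s,t)-\pa_1\theta_\alpha(s,t)\big)\ge 0$, is exactly the inequality the paper extracts from \eqref{uvst} with $(u,v)=(t,s)$, and your case analysis in (iii) reaches the same bound as the paper's rearrangement with $\max\{\lambda_1,\lambda_2\}$ and $\min\{\lambda_1,\lambda_2\}$; both arguments rest on (i), the nonnegativity of $\pa_1\theta_\alpha,\pa_2\theta_\alpha$, and concavity, so this is only a mild repackaging of the paper's proof.

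The genuine gap is in (ii): you establish it only for $r\ge\max(s,t)$ and $r\le\min(s,t)$, and the middle range $\min(s,t)<r<\max(s,t)$ is deferred to an unexecuted computation. That deferral is the whole difficulty in your scheme: convexity in $r$ of the defect is useless by itself, since a convex function that is nonnegative at the two ends of an interval can perfectly well be negative inside, so the minimizer analysis you postpone \emph{is} the missing proof, and it is not ``elementary but delicate'' bookkeeping that can be assumed to work out. In fact the case split is an artifact of pairing the arguments suboptimally: you keep $r$ in the first slot of both terms. Use the symmetry of $\theta_\alpha$ to write $\theta_\alpha(r,s)=\theta_\alpha(s,r)$ and apply \eqref{uvst} at the single base point $(s/2,t/2)$ to the two points $(s,r)$ and $(r,t)$, i.e.\ with $r$ in the \emph{second} slot of one term and the \emph{first} slot of the other. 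Then, by the homogeneity relations $\theta_\alpha(s/2,t/2)=2^{\alpha-2}\theta_\alpha(s,t)$ and $\pa_i\theta_\alpha(s/2,t/2)=2^{\alpha-1}\pa_i\theta_\alpha(s,t)$,
\begin{align*}
  \theta_\alpha(r,s)+\theta_\alpha(r,t)
  &\le 2\theta_\alpha(s/2,t/2)
  + \pa_1\theta_\alpha(s/2,t/2)\big((s-s/2)+(r-s/2)\big) \\
  &\phantom{xx}{}
  + \pa_2\theta_\alpha(s/2,t/2)\big((r-t/2)+(t-t/2)\big) \\
  &= 2^{\alpha-1}\theta_\alpha(s,t)
  + 2^{\alpha-1}r\big(\pa_1\theta_\alpha(s,t)+\pa_2\theta_\alpha(s,t)\big),
\end{align*}
which is (ii) for all $r,s,t>0$ with no case distinction and no use of the explicit power-mean formula. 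This ``crossing'' of the arguments via symmetry is precisely the trick in the paper's proof, which first normalizes $r=1$ by homogeneity, then estimates $\theta_\alpha(1,u)+\theta_\alpha(v,1)\le 2\theta_\alpha\big(\tfrac{u+1}{2},\tfrac{v+1}{2}\big)=2^{\alpha-1}\theta_\alpha(u+1,v+1)$ by midpoint concavity, symmetry, and homogeneity, and finally applies the tangent-plane bound \eqref{uvst}.
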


\begin{proof}
Identity (i) can be obtained by an elementary computation.
The proof of (ii) is similar to the proof of Lemma A.2 in \cite{FaMa15}.
Indeed, setting $u=s/r$ and $v=t/r$ and using the homogeneity properties of
$\theta_\alpha$ and its first partial derivatives, inequality (ii) is equivalent to
$$
  2^{\alpha-1}\big(\pa_1\theta_\alpha(u,v) + \pa_2\theta_\alpha(u,v)\big)
	- \big(\theta_\alpha(1,u)+\theta_\alpha(1,v)\big) 
	\ge -2^{\alpha-1}\theta_\alpha(u,v).
$$
This inequality follows from the concavity and the $(2-\alpha)$-homogeneity 
property of $\theta_\alpha$ and from (i):
\begin{align*}
  \theta_\alpha(1,u)+\theta_\alpha(1,v) 
	&\le 2\theta_\alpha\left(\frac{u+1}{2},\frac{v+1}{2}\right)
  = 2^{\alpha-1}\theta_\alpha(u+1,v+1) \\
	&\le 2^{\alpha-1}\big(\theta_\alpha(u,v) + \pa_1\theta_\alpha(u,v) 
	+ \pa_2\theta_\alpha(u,v)\big).
\end{align*}

\blue{Finally, by property (i),
\begin{align*}
  \lambda_1 & \pa_1\theta_\alpha(s,t)(s-t) - \lambda_2\pa_2\theta_\alpha(s,t)(s-t) \\
	&\le \max\{\lambda_1,\lambda_2\}\big(s\pa_1\theta_\alpha(s,t) 
	+ t\pa_2\theta_\alpha(s,t)\big) 
	- \min\{\lambda_1,\lambda_2\}\big(t\pa_1\theta_\alpha(s,t) 
	+ s\pa_2\theta_\alpha(s,t)\big) \\
	&= \max\{\lambda_1,\lambda_2\}(2-\alpha)\theta_\alpha(s,t)
	- \min\{\lambda_1,\lambda_2\}\big(t\pa_1\theta_\alpha(s,t) 
	+ s\pa_2\theta_\alpha(s,t)\big).
\end{align*}
Choosing $u=t$ and $v=s$ in \eqref{uvst} gives
$\pa_1\theta_\alpha(s,t)(s-t) + \pa_2\theta_\alpha(s,t)(t-s) \le 0$,
and combining this inequality with property (i) yields
\begin{align*}
  -\big(t & \pa_1\theta_\alpha(s,t) 
	+ s\pa_2\theta_\alpha(s,t)\big) 
	= \pa_1\theta_\alpha(s,t)(s-t) + \pa_2\theta_\alpha(s,t)(t-s) \\
	&{}- \big(s\pa_1\theta_\alpha(s,t) + t\pa_2\theta_\alpha(s,t)\big)
	\le -(2-\alpha)\theta_\alpha(s,t),
\end{align*}
such that
\begin{align*}
  \lambda_1 & \pa_1\theta_\alpha(s,t)(s-t) - \lambda_2\pa_2\theta_\alpha(s,t)(s-t) \\
	&\le \big(\max\{\lambda_1,\lambda_2\} - \min\{\lambda_1,\lambda_2\}\big)
	(2-\alpha)\theta_\alpha(s,t) \\
	&= |\lambda_1-\lambda_2|(2-\alpha)\theta_\alpha(s,t).
\end{align*}}
This concludes the proof.
\end{proof}

\blue{
\begin{lemma}\label{lem.theta3}
Let $\phi_\alpha(s)=(s^\alpha-s)/(\alpha-1)-s+1$ and $1<\alpha<2$.
It holds for all $A$, $B\ge 0$,
$$
  \Theta(A,B) := \inf_{s,t>0}\theta_\alpha(s,t)(A\phi_\alpha''(s)+B\phi_\alpha''(t)) 
	\ge (\alpha-1)(A+B).
$$
\end{lemma}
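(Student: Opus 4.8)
The plan is to reduce the claimed estimate to an elementary pointwise inequality. First I would record that for $\phi_\alpha(s)=(s^\alpha-s)/(\alpha-1)-s+1$ one has $\phi_\alpha''(s)=\alpha s^{\alpha-2}$, and recall from the definition of the power mean that $\theta_\alpha(s,t)=\frac{\alpha-1}{\alpha}\frac{s-t}{s^{\alpha-1}-t^{\alpha-1}}$. Substituting both expressions into $\theta_\alpha(s,t)(A\phi_\alpha''(s)+B\phi_\alpha''(t))$, the factors $\alpha$ cancel, and the claim $\Theta(A,B)\ge(\alpha-1)(A+B)$ becomes equivalent, after dividing by $\alpha-1>0$, to the single inequality
\begin{equation}\label{a.key}
  \frac{(s-t)\big(As^{\alpha-2}+Bt^{\alpha-2}\big)}{s^{\alpha-1}-t^{\alpha-1}} \ge A+B,
	\quad\text{for all } s\neq t,\ s,t>0,\ A,B\ge 0,
\end{equation}
together with the trivial diagonal case $s=t$, where the expression in the lemma equals exactly $A+B\ge(\alpha-1)(A+B)$.

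Next I would exploit the symmetry of \eqref{a.key}: interchanging $(s,A)$ with $(t,B)$ leaves both sides unchanged, so there is no loss of generality in assuming $s>t>0$. Since $1<\alpha<2$ gives $\alpha-1>0$, the denominator $s^{\alpha-1}-t^{\alpha-1}$ is then strictly positive, and \eqref{a.key} is equivalent to the polynomial-type inequality
$$
  (s-t)\big(As^{\alpha-2}+Bt^{\alpha-2}\big) - (A+B)\big(s^{\alpha-1}-t^{\alpha-1}\big) \ge 0.
$$
The heart of the argument is then a direct expansion and regrouping of this expression by the coefficients $A$ and $B$, which I expect to collapse to the factored form $(t^{\alpha-2}-s^{\alpha-2})(At+Bs)$.

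Finally, the sign analysis closes the proof: because the exponent $\alpha-2$ is negative, the map $x\mapsto x^{\alpha-2}$ is strictly decreasing on $(0,\infty)$, so $s>t$ forces $t^{\alpha-2}-s^{\alpha-2}>0$; and $At+Bs\ge 0$ since $A,B\ge 0$ and $s,t>0$. Hence the factored difference is nonnegative, which establishes \eqref{a.key} and therefore the lemma. I do not anticipate a genuine obstacle here; the only point requiring care is the bookkeeping in the expansion leading to the factorization, together with the observation that it is precisely the sign of $\alpha-2$ (i.e.\ $\alpha<2$) that drives the estimate --- consistent with the boundary value $\Theta(A,B)=A+B$ recovered at $\alpha=2$.
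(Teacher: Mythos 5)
Your proof is correct, and it takes a genuinely different route from the paper. You reduce the claim to the pointwise inequality $(s-t)(As^{\alpha-2}+Bt^{\alpha-2})\ge(A+B)(s^{\alpha-1}-t^{\alpha-1})$ for $s>t$ and verify it by direct expansion; your claimed factorization is indeed exact,
$$
  (s-t)\big(As^{\alpha-2}+Bt^{\alpha-2}\big)-(A+B)\big(s^{\alpha-1}-t^{\alpha-1}\big)
  = \big(t^{\alpha-2}-s^{\alpha-2}\big)(At+Bs),
$$
and the sign conclusion follows from $\alpha-2<0$. The paper instead uses the integral representation
$$
  \theta_\alpha(s,t)=\frac{1}{\alpha}\int_0^1\big((1-m)s^{\alpha-1}+mt^{\alpha-1}\big)^{(2-\alpha)/(\alpha-1)}dm,
$$
pulls the factors $\phi_\alpha''(s)=\alpha s^{\alpha-2}$ and $\phi_\alpha''(t)=\alpha t^{\alpha-2}$ inside to rewrite the quantity as $A\int_0^1((1-m)+m(t/s)^{\alpha-1})^{(2-\alpha)/(\alpha-1)}dm + B\int_0^1((1-m)(s/t)^{\alpha-1}+m)^{(2-\alpha)/(\alpha-1)}dm$, drops the nonnegative cross terms in each base (legitimate since the exponent $(2-\alpha)/(\alpha-1)$ is positive), and evaluates $\int_0^1(1-m)^{(2-\alpha)/(\alpha-1)}dm=\alpha-1$. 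Your argument is more elementary --- no integral representation, just algebra --- and it has the added benefit of producing an exact remainder formula, showing precisely how far the expression exceeds $(\alpha-1)(A+B)$ and that the inequality degenerates to equality as $\alpha\to 2$. The paper's approach, on the other hand, reuses the same integral representation that underlies the concavity analysis of $\theta_\alpha$ (Lemmas \ref{lem.phi1} and \ref{lem.theta1}), so it fits more uniformly into the general framework of the appendix. Both yield the same constant.
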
}

\begin{proof}
Since
$$
  \theta_\alpha(s,t) =\frac{\alpha-1}{\alpha}\frac{s-t}{s^{\alpha-1}-t^{\alpha-1}}
	= \frac{1}{\alpha}\int_0^1\big((1-m)s^{\alpha-1} 
	+ mt^{\alpha-1}\big)^{(2-\alpha)/(\alpha-1)}dm,
$$
it follows that
\begin{align*}
  \theta_\alpha(s,t)(A\phi_\alpha''(s)+B\phi_\alpha''(t)) 
	&= A\int_0^1\bigg((1-m) + m\left(\frac{t}{s}\right)^{\alpha-1}
	\bigg)^{(2-\alpha)/(\alpha-1)} dm \\
	&\phantom{xx}{}+ B\int_0^1\bigg((1-m)\left(\frac{s}{t}
	\right)^{\alpha-1} + m\bigg)^{(2-\alpha)/(\alpha-1)} dm \\
	&\ge A\int_0^1 (1-m)^{(2-\alpha)/(\alpha-1)} dm
	+ B\int_0^1 m^{(2-\alpha)/(\alpha-1)} dm \\
	&= (\alpha-1)(A+B),
\end{align*}
which finishes the proof.
\end{proof}

\end{appendix}


\end{document}